%% file: main.tex
\documentclass[letterpaper, 10 pt, conference]{ieeeconf}  

\IEEEoverridecommandlockouts                     

\usepackage{graphics} 
\usepackage{epsfig} 
\usepackage{mathptmx} 
\usepackage{times} 
\usepackage{amsmath} 
\usepackage{amssymb}  
\usepackage{textgreek,mathtools}
\usepackage{graphicx,subcaption}
\usepackage[colorlinks=true, allcolors=blue]{hyperref}
\usepackage{upgreek}
\usepackage{amsfonts}

\usepackage{amsthm}
\usepackage{mathrsfs}
\usepackage{fontenc}
\usepackage{empheq}
\usepackage{enumerate}
\usepackage{comment}
\usepackage{algorithm}
\usepackage{algorithmic}
\usepackage{bbm}
\usepackage{cancel}
\usepackage{tcolorbox}
\mathtoolsset{showonlyrefs}

\usepackage{cite}

\usepackage{xfrac}

\newcommand{\N}{\mathbb{N}}

\usepackage{etoolbox}

\input{Macros}

\title{\LARGE \bf Deception in Linear-Quadratic Control
}

\author{Yerin Kim$^{1}$, Haosheng Zhou$^{2}$, Alexander Benvenuti$^{1}$, Ruimeng Hu$^{2}$, Matthew Hale$^{1}$
\thanks{
$^{1}$School of  Electrical and Computer Engineering, Georgia Institute of Technology, Atlanta, GA, USA.
Emails: \texttt{\{yerinkim, abenvenuti3,matthale\}@gatech.edu}.
}
\thanks{$^{2}$Department of Mathematics, and Department of
Statistics and Applied Probability, University of California, Santa Barbara,
CA, USA.
Email: \texttt{\{hzhou593, rhu\}@ucsb.edu}.}
\thanks{
This work was partially supported by ONR
under grant N00014-24-1-2432 and the NSF Graduate
Research Fellowship under grant DGE-2039655. Any opinions, findings and conclusions or recommendations expressed in this material are those of the authors and do not necessarily reflect the views of sponsoring agencies.
}
}

\begin{document}

\maketitle
\thispagestyle{empty}
\pagestyle{empty}

\begin{abstract}
Systems operating in adversarial environments may inadvertently leak sensitive information to adversaries. 
To address this challenge, we revisit the linear-quadratic control framework and introduce deception to actively mislead adversaries. 
Specifically, we consider a blue-team agent, observed by a red-team agent, that seeks to minimize a quadratic cost while introducing perturbations to its trajectories over time.
These perturbations are designed to corrupt the red team's observations and, consequently, any downstream inferences, while remaining undetected by a red team using sequential hypothesis testing.
We implement this idea by augmenting the blue team's quadratic cost with a likelihood ratio statistic.
Under this augmented control problem, we derive a semi-explicit solution for the optimal deceptive control law and establish corresponding well-posedness results.
In addition, we provide both numerical approximations and analytical bounds for the probability 
that the red team detects
the blue team's deceptive strategies. 
Numerical results demonstrate the effectiveness of the proposed framework in deceiving the red team while remaining undetected with probability near~$1$.
\end{abstract}


\input{1-Intro-E}

\input{2-Prelims}

\input{3-Methodology-E}

\input{4-Results-E}

\input{5-Conclusion}






\bibliographystyle{IEEEtran}
\bibliography{ref}

\input{6-Appendix-E}

\end{document}

%% file: Macros.tex
\theoremstyle{definition}
\newtheorem{definition}{\protect\definitionname}
\newtheorem{assumption}{\protect\assumptionname}
\newtheorem{lemma}{\protect\lemmaname}

\newtheorem{remark}{Remark}

\theoremstyle{plain}
\newtheorem{theorem}{\protect\theoremname}
\newtheorem{problem}{Problem}

\newtheorem{corollary}{Corollary}

\DeclareMathOperator{\tr}{Tr}

\newcommand{\pushright}[1]{\ifmeasuring@#1\else\omit\hfill$\displaystyle#1$\fi\ignorespaces}

\makeatother

\providecommand{\assumptionname}{Assumption}
\providecommand{\definitionname}{Definition}
\providecommand{\lemmaname}{Lemma}
\providecommand{\theoremname}{Theorem}
\providecommand{\propositionname}{Proposition}
\newcommand{\norm}[1]{\left\lVert#1\right\rVert}

\newcommand{\trace}[1]{\tr\left(#1\right)}

\newcommand{\xv}{\mathbf{x}}
\newcommand{\uv}{\mathbf{u}}
\newcommand{\wv}{\mathbf{w}}
\newcommand{\yv}{\mathbf{y}}
\newcommand{\E}{\mathbb{E}}
\newcommand{\EPS}{\varepsilon}
\newcommand{\R}{\mathbb{R}}
\newcommand{\upperthreshold}{U}
\newcommand{\lowerthreshold}{L}
\newcommand{\DeceptionMeasure}{D}

\newboolean{show_comments}
\setboolean{show_comments}{true} 
\ifthenelse{\boolean{show_comments}}{
    \newcommand{\Alex}[1]{\textcolor{blue}{Alex:~#1}} 
    \newcommand{\mh}[1]{\textcolor{red}{MH:~#1}} 
} {
    \newcommand{\Alex}[1]{}
    \newcommand{\mh}[1]{}
    \newcommand{\AuthorC}[1]{}
}

%% file: 1-Intro-E.tex
\section{Introduction}\label{sec:intro}
Linear-quadratic (LQ) control has been used in systems such as UAVs~\cite{martins2019linear, dharmawan2017model}, smart grids~\cite{keshtkar2014proposing}, and satellites~\cite{ke2017study}, among 
many others~\cite{jin2023research, ebrahim2010application, van2002industrial, rakhmatillaev2025integrative, kumar2023review}. 
However, conventional LQ control may leak information, such as cost functions~\cite{zhang2019inverse} or target states~\cite{li2026inverse}, to adversaries observing the system, 
which may enable adversaries to take targeted counteractions~\cite{zhang2014optimal,zhou2020optimal}.
To prevent such countermeasures, it is desirable for systems to mislead adversaries about their intentions while simultaneously minimizing their cost functions.

In this work, we study deceptive LQ control in settings where a blue team agent is observed by a red
team agent. The blue team pursues two simultaneous objectives:
(i) to execute a primary task, encoded as the minimization of a given quadratic cost functional, and (ii) to perturb its control inputs (and hence the resulting state trajectories) to corrupt the red team's observations and mislead the adversary about its intentions.
We model the red team as not only observing the blue team, but also attempting
to infer whether the blue team is behaving deceptively. 

Under this setting, if the blue team introduces large perturbations, it may effectively corrupt the red team's observations.
However, doing so may lead to two undesirable consequences: first, it may significantly degrade performance in minimizing the cost functional, and second, it makes the deception easily detectable by the red team.
Motivated by these trade-offs, the blue team seeks to introduce deceptive perturbations at a level that remains undetectable (i.e., the red team believes that no such deception is taking place), which we refer to as the ``stealthy deception'' problem.  

To model the detection mechanism, we consider a red team that employs sequential hypothesis testing, a powerful and general-purpose tool that typically requires fewer observations than comparable techniques~\cite{doerks2026relative}. 
We consider a sophisticated red team that has knowledge about the blue team's deceptive control strategy through external information sources,
enabling a worst-case analysis from the blue team's perspective.

\subsection{Summary of Contributions}
Our main contributions are summarized as follows:
\begin{enumerate}
    \item We propose a discrete-time deceptive LQ control framework that integrates a likelihood ratio statistic from sequential hypothesis testing into the cost functional, scaled by an intensity parameter.  This augmented control problem preserves the quadratic structure of the cost and is analytically tractable. Using standard control techniques, we derive a semi-explicit characterization of the optimal deceptive controller and establish an associated well-posedness result. Compared to continuous-time formulations \cite{zhou2025integrating,zhou2025adversarial}, the proposed discrete-time model provides a more realistic depiction of strategic interactions and admits exact solutions in terms of recursions rather than ordinary differential equations, while the corresponding well-posedness analysis is technically more challenging. 
    \item We study the stealthiness of the proposed deceptive control strategy under sequential hypothesis testing (SHT) as the inference mechanism. To the best of our knowledge, this setting has not been studied in the existing literature. We derive analytical bounds on the detection probability using concentration inequalities and develop sampling-based methods to approximate it, providing
guidance for selecting the deception intensity to balance
the deception strength against the risk of detection.
    \item 
    We conduct numerical experiments to solve for the deception intensity, under which deceptive perturbations are present in the trajectories while remaining undetected by the adversary.
    This demonstrates the effectiveness and practical value of the proposed framework.
\end{enumerate}

\subsection{Related Works}\label{subsec:related_works}

Adversarial decision-making has been studied in the context of both privacy and deception.
Differential privacy has been widely studied to protect sensitive information~\cite{hale2018privacy, benvenuti2024guaranteed}, but it does not provide active deception of adversaries, which is the main focus in this work. 
A key strength of differential privacy is that it provides general-purpose privacy protections, rather than countering a specific form of inference by an observer. 
We pursue a similar objective by designing deception that corrupts an adversary's downstream inferences in a general manner, rather than seeking to counter a particular one.
However, our approach differs fundamentally: differential privacy seeks to induce uncertainty in an observer, whereas the deception mechanism we develop aims to make the adversary confidently draw the incorrect conclusion that no deception is present. 
As a result, the adversary is led to trust corrupted information when conducting downstream inferences.

Deception has been studied in the context of supervisory control~\cite{karabag2021deception, karabag2022exploiting} and adversarial control~\cite{fotiadis2025deception, kim2024defining}.
Our work differs from~\cite{karabag2021deception} and~\cite{karabag2022exploiting} in that we consider adversarial settings without a reference policy to follow. It also differs from~\cite{fotiadis2025deception} and~\cite{kim2024defining} by considering a setting in which the adversary is aware that the blue team may act deceptively and knows the form such deception may take.

In the context of adversarial interaction, sequential hypothesis testing (SHT) 
has previously been used to detect adversarial attacks~\cite{ho2011fast, vamsi2014sybil}.
In contrast, we use SHT both to construct deception-aware cost functions and as an inference mechanism employed by the red team.
Integrating SHT into deceptive control was studied in~\cite{zhou2025integrating,zhou2025adversarial} in continuous-time LQ control. 
Our work differs from~\cite{zhou2025integrating,zhou2025adversarial} in two main aspects.
First, we adopt a discrete-time LQ setting in which the red team fully observes the state of the blue team, whereas~\cite{zhou2025integrating,zhou2025adversarial} rely on partial observability for deception.
Second, we explicitly study the stealthiness of deception, which was not addressed in~\cite{zhou2025integrating,zhou2025adversarial}.

\subsection{Notations}\label{subsec:notation}
We use~$\N$ to denote the positive integers and~$\R$ to denote the reals. 
For~$N\in\mathbb{N}$, we define~$[N] := \{0, 1, \ldots, N\}$. 
We use~$I_n$ for the~$n\times n$ identity matrix and~\(\mathbf{0}_{m\times n}\) for the~$m\times n$ zero matrix.
We write~\(I\) or~\(0\) when the dimension is clear from context.
For a symmetric matrix~$M$, we use $\norm{\xv}_M^2:= \xv^TM\xv$.
Matrix inequalities \(\succ\) (resp. \(\succeq\)) are understood in the positive definite (resp. positive semi-definite) sense.
The symbol $\mathbb{I}_{\phi}$ denotes the indicator function of
a mathematical statement~$\phi$; $\mathbb{I}_{\phi} = 1$ if~$\phi$ is true
and~$\mathbb{I}_{\phi} = 0$ if~$\phi$ is false. 
We denote by~\(X\sim \mathcal{N}(\mu, \Sigma) \) an \(\R^n\)-valued random variable~$X$ \
following the Gaussian distribution with mean~$\mu \in \mathbb{R}^n$ and covariance~$\Sigma \in \mathbb{R}^{n \times n}$.
We use~$\mathcal{Q}:\R\rightarrow\R$ to denote the tail integral of the standard normal distribution, 
i.e., for~$X\sim\mathcal{N}(0,1)$ we have~$\mathcal{Q}(x) := \mathbb{P}(X>x) = \frac{1}{\sqrt{2\pi}}\int_x^\infty \exp(-\frac{u^2}{2}) du$.
We use $a \wedge b$ for $\min\{a,b\}$ and $a \vee b$ for $\max\{a,b\}$.
The expressions~$\trace{M}$, \(\det(M)\), and \(M^s := (M+M^T)/2\) denote the trace, determinant, and symmetric part of a square matrix~$M$.
The operator~$\mathrm{Concat}(\xv_1, \dots, \xv_k) := (\xv_1^T, \ldots, \xv_k^T)^T$ concatenates the vectors~$\xv_1$ through~$\xv_k$, and \(\mathrm{diag}(A_1,\ldots,A_k)\) returns a block matrix with diagonal blocks \(A_1\) through \(A_k\).

%% file: 2-Prelims.tex
\section{Preliminaries and Problem Statements}\label{sec:prelims}
In this section, we present background material and formulate the main problems studied in the rest of the paper.

\subsection{Sequential Probability Ratio Test}
A sequential hypothesis test (SHT) uses sequential observations to test the null hypothesis~$\mathsf{H}_0$ against the alternative~$\mathsf{H}_1$. 
For SHT, the number of observations 
required is not predetermined, and the outcome at each time $t$
is one of the following: (i) accepts~$\mathsf{H}_0$,
(ii) rejects~$\mathsf{H}_0$, or (iii) continues by taking another observation. 
One specific instance of SHT is the sequential probability ratio test. 

\begin{definition}[Sequential Probability Ratio Test (SPRT)~\cite{wald1992sequential}]\label{def:sprt}
    Let~$P_{0m} := \mathbb{P}_{\mathsf{H}_0}(\{x_i\}_{i\in[m]})$ and~$P_{1m} := \mathbb{P}_{\mathsf{H}_1}(\{x_i\}_{i\in[m]})$ be the probabilities of observing the sequence~$\{x_i\}_{i\in[m]}$ under
    hypotheses~$\mathsf{H}_0$ and~$\mathsf{H}_1$, respectively, where $\mathbb{P}_{\mathsf{H}_0}(\cdot)$ and $\mathbb{P}_{\mathsf{H}_1}(\cdot)$ denote probabilities under $\mathsf{H}_0$ and $\mathsf{H}_1$. 
    The \emph{sequential probability ratio test} 
    rejects~$\mathsf{H}_0$ if~$P_{1m}/P_{0m}\geq \upperthreshold$, accepts~$\mathsf{H}_0$ if~$P_{1m}/P_{0m}\leq \lowerthreshold$, and takes 
    an additional observation if~$\lowerthreshold<P_{1m}/P_{0m}<\upperthreshold$, where~$0<\lowerthreshold<\upperthreshold$ are fixed constants and the ratio~$P_{1m}/P_{0m}$ is calculated at each time step~\(m\).
\end{definition}

SPRT is the most powerful sequential test for simple \textit{vs.} simple hypotheses and minimizes the expected number of observations needed to reach a decision~\cite{wald1992sequential}.
By Lemma~\ref{lem:sprt_parameters}, the thresholds $\upperthreshold$ and $\lowerthreshold$ can be chosen to satisfy prescribed error probability requirements. 

\begin{lemma}[SPRT Thresholds \cite{wald1992sequential}] 
\label{lem:sprt_parameters}
    In SHT, a ``type I error'' refers to rejecting $\mathsf{H}_0$ when~$\mathsf{H}_0$ is true, and 
    a ``type II error'' refers to accepting~$\mathsf{H}_0$ when~$\mathsf{H}_1$ is true.
    Suppose one sets
    \begin{equation}
        \upperthreshold = \frac{1-b}{a} \quad \textnormal{ and } \quad \lowerthreshold = \frac{b}{1-a}
    \end{equation}
    for~$a,b\in(0,\sfrac{1}{2})$
    and implements the SPRT according to the decision rule in Definition~\ref{def:sprt}. 
    Then 
    the probabilities of type I and II errors are respectively bounded by~$a$ and~$b$, i.e.,~$\mathbb{P}_{\mathsf{H}_0}(\text{Reject }\mathsf{H}_0)\leq a$ and~$\mathbb{P}_{\mathsf{H}_1}(\text{Accept }\mathsf{H}_0)\leq b$.
\end{lemma}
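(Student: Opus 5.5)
The plan is to obtain the two Wald inequalities that link $\alpha := \mathbb{P}_{\mathsf{H}_0}(\text{Reject }\mathsf{H}_0)$ and $\beta := \mathbb{P}_{\mathsf{H}_1}(\text{Accept }\mathsf{H}_0)$ to the thresholds, and then to solve this pair jointly for the thresholds $\upperthreshold = (1-b)/a$ and $\lowerthreshold = b/(1-a)$. The joint solution is what should single out the individual bounds $\alpha\le a$ and $\beta\le b$. As groundwork, let $\tau$ be the first $m$ with $P_{1m}/P_{0m}\notin(\lowerthreshold,\upperthreshold)$. I assume $\tau<\infty$ almost surely under both hypotheses, as in Wald's treatment. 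This holds when the per-step log-likelihood increments are nondegenerate, for instance in a Gaussian i.i.d.\ setting, since the log-ratio random walk then leaves any bounded interval almost surely. Write $R=\{P_{1\tau}/P_{0\tau}\ge \upperthreshold\}$ for the rejection event and $A=\{P_{1\tau}/P_{0\tau}\le\lowerthreshold\}$ for the acceptance event, so that $\mathbb{P}_{\mathsf{H}_i}(R)+\mathbb{P}_{\mathsf{H}_i}(A)=1$ for $i=0,1$.

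\textbf{Step 1 (change of measure).} Decompose $R=\bigcup_m (R\cap\{\tau=m\})$; each piece is determined by $x_0,\dots,x_m$. On each piece $P_{1m}\ge \upperthreshold P_{0m}$, and summing over $m$ gives
\begin{equation}
1-\beta=\mathbb{P}_{\mathsf{H}_1}(R)\ge \upperthreshold\,\mathbb{P}_{\mathsf{H}_0}(R)=\upperthreshold\,\alpha .
\end{equation}
On $A$ we have $P_{1m}\le \lowerthreshold P_{0m}$, and the same argument gives
\begin{equation}
\beta\le \lowerthreshold(1-\alpha).
\end{equation}
Substituting the thresholds of Lemma~\ref{lem:sprt_parameters} yields
\begin{equation}
\alpha(1-b)\le a(1-\beta)\quad\text{and}\quad \beta(1-a)\le b(1-\alpha).
\end{equation}

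\textbf{Step 2 (joint solution).} Suppose the ratio lands exactly on the boundary at time $\tau$, that is, $P_{1\tau}/P_{0\tau}\in\{\lowerthreshold,\upperthreshold\}$ with no overshoot. Then both inequalities in Step 1 become equalities. This gives a $2\times2$ linear system in $(\alpha,\beta)$, and a direct check shows its unique solution is $\alpha=a$, $\beta=b$. The determinant is $(1-b)(1-a)-ab = 1-a-b>0$ because $a,b<1/2$, so the solution is unique. This is exactly the choice of $\upperthreshold,\lowerthreshold$ made in the statement.

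\textbf{Step 3 (the overshoot, which I expect to be the main obstacle).} To pass to the general case, I would try to show that overshoot only pushes $(\alpha,\beta)$ toward smaller values. Concretely, one writes $\mathbb{P}_{\mathsf{H}_0}(R)=\mathbb{E}_{\mathsf{H}_1}\!\big[\mathbb{I}_R\,P_{0\tau}/P_{1\tau}\big]$, and overshoot makes $P_{0\tau}/P_{1\tau}$ strictly smaller than $1/\upperthreshold$. The aim is then to compare the resulting system of inequalities against the boundary system via monotonicity in $(\alpha,\beta)$.

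Here I must be candid. The two inequalities of Step 1 by themselves do not force $\alpha\le a$: the pair $\beta=0$, $\alpha=a/(1-b)$ satisfies both. So the individual bounds cannot come from Step 1 alone. Extra structure of the test is required, for example a coupling showing that overshoot on the rejection side and overshoot on the acceptance side cannot occur in the unbalanced way this pair requires. Alternatively, the statement must be read in Wald's approximate sense, where overshoot is neglected. I would first attempt the coupling argument in the paper's Gaussian setting. If it fails, I would conclude that the rigorous content is exactly Steps 1--2, namely the boundary equalities together with the general bounds $\alpha\le a/(1-b)$ and $\beta\le b/(1-a)$.
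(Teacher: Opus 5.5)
\textbf{The gap in Step 3 is real and cannot be closed.} The paper gives no proof of this lemma; it only cites Wald. Your Steps 1 and 2 are correct. The change-of-measure inequalities $\alpha(1-b)\le a(1-\beta)$ and $\beta(1-a)\le b(1-\alpha)$ are exactly Wald's. In the no-overshoot case they become a linear system with determinant $1-a-b>0$ and unique solution $(a,b)$.

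No coupling or other argument can finish Step 3, because the lemma as literally stated is false for a general SPRT: the pair $\beta=0$, $\alpha=a/(1-b)$ that you found is actually attained. Take $x_0$ deterministic and $x_1\in\{r,s\}$ with $\mathbb{P}_{\mathsf{H}_1}(x_1=r)=1$ and $\mathbb{P}_{\mathsf{H}_0}(x_1=r)=a/(1-b)\in(0,1)$.
At $m=0$ the ratio is $1\in(L,U)$, so the test continues. At $m=1$:
\begin{itemize}
\item on $\{x_1=r\}$ the ratio equals $(1-b)/a=U$, so $\mathsf{H}_0$ is rejected;
\item on $\{x_1=s\}$ the ratio equals $0\le L$, so $\mathsf{H}_0$ is accepted.
\end{itemize}
Thus $\mathbb{P}_{\mathsf{H}_0}(\text{Reject }\mathsf{H}_0)=a/(1-b)>a$.

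The example also shows why your monotonicity heuristic fails. Overshoot past $L$ lowers $\beta$, and a smaller $\beta$ \emph{relaxes} the constraint $\alpha\le a(1-\beta)/(1-b)$. So overshoot on one boundary can push the error probability attached to the other boundary above its nominal level.

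\textbf{What can be proved.} What is rigorously true, and what Wald actually proves, is your fallback plus one line you should add. Summing the two Step~1 inequalities, the terms $a\beta$ and $b\alpha$ appear on both sides and cancel, giving $\alpha+\beta\le a+b$. Hence at most one error probability exceeds its target, and then by at most the factor $1/(1-b)$ or $1/(1-a)$.

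The individual bounds $\alpha\le a$ and $\beta\le b$ are Wald's approximation, obtained by neglecting overshoot. Establishing them in the paper's Gaussian model would be a different, model-specific result. It would require quantitative control of the overshoot at both boundaries, which nothing in the lemma supplies.

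\textbf{A minor point.} Step~1 does not need $\tau<\infty$ almost surely. Disjointness of $R$ and $A$ already gives $\mathbb{P}_{\mathsf{H}_1}(R)\le 1-\beta$ and $\mathbb{P}_{\mathsf{H}_0}(A)\le 1-\alpha$, which is all you use. This matters because the paper's test is truncated at the horizon $T$, where your random-walk argument for termination does not apply.
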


\subsection{Problem Statements}
In this work, we model deception in a red team versus blue team setting, where the blue team tries to perturb its control inputs to corrupt the red team's observations, thereby concealing its intentions.
The red team, in turn, seeks to detect such deception by attempting to detect any perturbations introduced by the blue team. 
We assume that the blue team's dynamics are known to both teams, and that the red team can observe full state trajectories of the blue team.
We suppose the red team
has complete knowledge of the deception strategy implemented by the blue team, which provides
a worst-case analysis from the blue team's perspective. 
By tuning the deception intensity, the blue team can either enlarge or shrink the detectability of its deception.

Under this setting, we aim to solve the following problems:
\begin{problem}\label{prob:control_law}
    Formulate a deceptive cost for the blue team,
    and compute the resulting optimal control law that fulfills the blue team's primary task
    while introducing deceptive perturbations. 
\end{problem}
\begin{problem}\label{prob:stealthy_misdirection}
    Compute the deception intensity under which the induced deceptive perturbations remain undetected by the red team.     
\end{problem}
\begin{problem}\label{prob:empirical_verification}
    Empirically evaluate the performance of the blue team's deceptive control
    in terms of the incurred quadratic cost and the probability of detection by the red team. 
\end{problem}

%% file: 3-Methodology-E.tex
\section{Blue team control design}\label{sec:control_design}

In this section, we address Problem~\ref{prob:control_law}.
We first introduce the blue team's dynamics and costs
associated with the primary task, and then augment this cost with a deception term and derive the 
resulting optimal deceptive controller.

\subsection{Primary Task}\label{subsec:primary_task}
Fix a time horizon~$T \in \mathbb{N}$, and
consider a probability space~$(\Omega, \mathcal{F}, \mathbb{P})$ 
supporting i.i.d. Gaussian random variables~$Z_t$ and~$Y_t$, {where~$Z_t\sim\mathcal{N}(0, \Sigma^2_z)$, $Y_t\sim\mathcal{N}(0, \Sigma^2_y)$,
with symmetric~$\Sigma_z^2 \succ 0$ and~$\Sigma_y^2 \succ 0$. 
The state dynamics of the blue team are governed by Markovian controls \(\alpha,\beta\), and are given by
\begin{equation}\label{eq:state_dynamics}
    \begin{aligned}
    \mathbf{v}_{t+1} &= \alpha_t + \mathbf{v}_t + Z_t,\ \mathbf{v}_0\in\R^n,\\
    \mathbf{p}_{t+1} &= (\mathbf{v}_t+\beta_t) +\mathbf{p}_t + Y_t,\ \mathbf{p}_0\in\R^n,
    \end{aligned}
\end{equation}
where~$\mathbf{v}_t, \mathbf{p}_t, \alpha_t, \beta_t$ take values in $\R^n$. Equivalently,
\begin{align}\label{eq:matrix_form_state_dynamics}
    &\xv_{t+1} = A\xv_t + B\uv_t + \wv_t,\ \text{where }\xv_t := \mathrm{Concat}(\mathbf{v}_t,\mathbf{p}_t),\\
    &\uv_t := \begin{bmatrix} \alpha_t\\ \beta_t\end{bmatrix},\ \wv_t := \begin{bmatrix} Z_t\\ Y_t\end{bmatrix},\ A := \begin{bmatrix}
    I_n&\mathbf{0}_{n\times n}\\I_n&I_n
    \end{bmatrix},\ B := I_{2n},
\end{align}
with a given deterministic initial state~$\xv_0 \in\R^{2n}$.
As its primary task, the blue team minimizes the expected cost
\begin{equation}\label{eq:primary_cost}
    J^{\mathrm{primary}}(\alpha, \beta) = \E\Big[\sum_{t=0}^{T-1} r_t(\mathbf{v}_t, \mathbf{p}_t, \alpha_t, \beta_t) + g(\mathbf{v}_T, \mathbf{p}_T)\Big],
\end{equation}
where the running and terminal costs are given by
\begin{equation}
\label{eq:primary_task}
    \begin{aligned}      
    r_t(\mathbf{v},\mathbf{p},\alpha,\beta) & = \tfrac12{\alpha}^TR_\alpha\alpha + \tfrac12\beta^TR_\beta\beta 
     + \tfrac12(\mathbf{v} - \bar{\mathbf{v}}_t)^TR_v(\mathbf{v} - \bar{\mathbf{v}}_t),\\
    g(\mathbf{v},\mathbf{p}) &= \tfrac12(\mathbf{v}-\bar{\mathbf{v}}_T)^T T_v(\mathbf{v}-\bar{\mathbf{v}}_T).
    \end{aligned}
\end{equation}
Here, $R_\alpha, R_\beta, R_v, T_v \in\R^{n\times n}$ are symmetric positive 
definite, and the reference trajectory $\bar{\mathbf{v}}_t \in \mathbb{R}^n$ is given for any $t \in [T]$.

\begin{remark}
    Since the running and terminal costs do not directly depend on $\mathbf{p}_t$, 
    any choice of $\beta \not\equiv 0$ increases the blue team's expected cost. 
    Hence, the optimal control for the primary task is given by $\beta^* \equiv 0$.
    In what follows, we interpret the control $\beta$ as being used to implement deception; consequently, any introduction of deception (i.e., any deviation from \(\beta\equiv 0\)) increases $J^{\mathrm{primary}}$. 
\end{remark}

\subsection{SHT Formulation}

The blue team has pre-determined 
deception patterns that it seeks to use, which are given by
the sequences of~\(n\times n\) matrices~$F^b := \{F^b_t\}_{t \in [T-1]}$ and~$F^c := \{F^c_t\}_{t \in [T-1]}$, and the \(\R^n\)-valued vectors~$\mathbf{f}^d := \{\mathbf{f}^d_t\}_{t \in [T-1]}$.
Then, the blue team's linear deceptive control law for~$\beta$ has the form
\begin{equation}\label{eq:purely_deceptive_beta}
    \beta_t = \Theta^b_t \mathbf{v}_t + \Theta^c_t \mathbf{p}_t + \mathbf{\theta}^d_t \textnormal{ for all } t \in [T-1],
\end{equation}
and 
its goal is to use this controller (or a weighted version thereof)
in order to deceive the red team. 
The blue team models the red team as using sequential
hypothesis testing with two hypotheses, namely
\begin{subequations} \label{eq:sht_hypothesis}    
\begin{align}
        \mathsf{H}_0&: \Theta^b_t = \Theta^c_t = \mathbf{0}_{n \times n}, \mathbf{\theta}^d_t = \mathbf{0}_{n \times 1} \textnormal{ for all } t \in [T-1],\label{eq:h0def} \\
        \mathsf{H}_1&: \Theta^b_t = F^b_t, \Theta^c_t = F^c_t, \mathbf{\theta}^d_t = \mathbf{f}^d_t \textnormal{ for all } t \in [T-1]. \label{eq:h1def} 
\end{align}
\end{subequations}

The null hypothesis $\mathsf{H}_0$ corresponds to the belief that the blue team is not introducing deception, while the alternative hypothesis $\mathsf{H}_1$ reflects the blue team's adoption of the deceptive control law \(F^b_t \mathbf{v}_t + F^c_t \mathbf{p}_t + \mathbf{f}^d_t\).
Consequently, if the red team accepts~$\mathsf{H}_0$, it regards the blue team's trajectories as credible for downstream analysis, such as cost inference. 
In contrast, if the red team accepts~$\mathsf{H}_1$, it confirms that the blue team's trajectories are subject to deceptive perturbations and are therefore not credible for further analysis.

Since~\(\xv\) remains a Markov chain under Markovian controls~\(\alpha\) and~\(\beta\), by Definition~\ref{def:sprt}, the SPRT statistic at time~$t$ takes the form 
\begin{equation}\label{eq:L_T}
    L_t(\xv) := \frac{\mathbb{P}_{\mathsf{H}_1} (\{\xv_k\}_{k\in[t]})}{\mathbb{P}_{\mathsf{H}_0} (\{\xv_k\}_{k\in[t]})}= \prod_{k=0}^{t-1}\frac{\mathbb{P}_{\mathsf{H}_1} (\xv_{k+1}\mid\xv_k)}{\mathbb{P}_{\mathsf{H}_0} (\xv_{k+1}\mid \xv_k)}.
\end{equation}
Intuitively, a larger (resp. smaller) $L_t(\xv)$ indicates that the observed trajectory 
$\{\xv_k\}_{k\in[t]}$ is more likely to have been generated by the control~$\beta$ associated with~\(\mathsf{H}_1\) in~\eqref{eq:h1def} (resp.~\(\mathsf{H}_0\) in~\eqref{eq:h0def}).
Using the state dynamics~\eqref{eq:state_dynamics}, we get
\begin{multline}\label{eq:P_H_0_at_t}
    \mathbb{P}_{\mathsf{H}_0}(\xv_{k+1}\mid \xv_k) = (2\pi)^{-n} [\det (\Sigma^2_z) \,\det(\Sigma^2_y)]^{-\frac12} \\ \exp\Big(-\frac12\norm{\mathbf{v}_{k+1} - \mathbf{v}_{k} - \alpha_{k}}_{(\Sigma_z^2)^{-1}}^2 \\ - \frac12 \norm{\mathbf{p}_{k+1} - \mathbf{v}_{k} - \mathbf{p}_{k}}_{(\Sigma_y^2)^{-1}}^2\Big),
\end{multline}
\vspace{-2em}
\begin{multline}\label{eq:P_H_1_at_t}
    \mathbb{P}_{\mathsf{H}_1}(\xv_{k+1}\mid \xv_k) =(2\pi)^{-n} [\det (\Sigma^2_z) \,\det(\Sigma^2_y)]^{-\frac12}\\ \exp\Big(-\frac12\norm{\mathbf{v}_{k+1} - \mathbf{v}_{k} - \alpha_{k}}_{(\Sigma_z^2)^{-1}}^2\\ - \frac12 \big\|{\mathbf{p}_{k+1} - \mathbf{v}_{k} - \mathbf{p}_{k} -F^b_k \mathbf{v}_k - F^c_k \mathbf{p}_k- \mathbf{f}^d_k}\big\|_{(\Sigma_y^2)^{-1}}^2\Big).
\end{multline}
Plugging~\eqref{eq:P_H_0_at_t} and~\eqref{eq:P_H_1_at_t} into~\eqref{eq:L_T} and taking the logarithm yields
\begin{multline}\label{eq:log_L_T}
    \log L_t(\xv)  = \frac12\sum_{k=0}^{t-1} \big[  \norm{\mathbf{p}_{k+1} - \mathbf{v}_{k} - \mathbf{p}_{k}}_{(\Sigma_y^2)^{-1}}^2 \\ - \big\|{\mathbf{p}_{k+1} - \mathbf{v}_{k} - \mathbf{p}_{k} -F^b_k \mathbf{v}_k - F^c_k \mathbf{p}_k- \mathbf{f}^d_k}\big\|_{(\Sigma_y^2)^{-1}}^2\big]. 
\end{multline}

From~\eqref{eq:log_L_T}, \(\E\log L_t\) admits a representation as the expectation of a sum over all time steps.
However, the~$k^{th}$ summand depends on the future state~$\mathbf{p}_{k+1}$, and therefore cannot be directly incorporated into the running cost~\eqref{eq:primary_task} to give a Markovian control problem.
Nevertheless, using the dynamics in~\eqref{eq:state_dynamics},
we rewrite \(\mathbb{E}\log L_t\) as
\begin{multline}\label{eq:def_logLt}
    \mathbb{E}\log L_t(\xv)  = \frac12 \mathbb{E}\sum_{k=0}^{t-1} \big[ \norm{\beta_k + Y_k}_{(\Sigma_y^2)^{-1}}^2 \\ -\big\|{\beta_k + Y_k -F^b_k \mathbf{v}_k - F^c_k \mathbf{p}_k- \mathbf{f}^d_k}\big\|_{(\Sigma_y^2)^{-1}}^2\big]. 
\end{multline}
Since the control~$\beta_k$ and states~$\mathbf{v}_k$, $\mathbf{p}_k$ are independent of the centered Gaussian noise~\(Y_k\), 
we find 
\begin{align}
    \label{eqn:E_log_LT}
    &\mathbb{E}\log L_t(\xv)  = \frac12\mathbb{E}\sum_{k=0}^{t-1}  \big[ \norm{\beta_k}_{(\Sigma_y^2)^{-1}}^2 + \norm{Y_k}_{(\Sigma_y^2)^{-1}}^2 \\ 
    &\qquad - \big\|{\beta_k-F^b_k \mathbf{v}_k - F^c_k \mathbf{p}_k- \mathbf{f}^d_k}\big\|_{(\Sigma_y^2)^{-1}}^2- \norm{Y_k}_{(\Sigma_y^2)^{-1}}^2\big]\\
    &= \frac12\mathbb{E}\sum_{k=0}^{t-1}  \big[\norm{\beta_k}_{(\Sigma_y^2)^{-1}}^2 - \big\|{\beta_k-F^b_k \mathbf{v}_k - F^c_k \mathbf{p}_k- \mathbf{f}^d_k}\big\|_{(\Sigma_y^2)^{-1}}^2\big].
\end{align}
The~$k^{th}$ summand in this new expression only depends on the current states~$\mathbf{v}_k$, $\mathbf{p}_k$ and the control~\(\beta_k\), which facilitates its direct incorporation into the running cost~\eqref{eq:primary_task} when solving the deception model in Section~\ref{subsec:solving_misdirection_model}.

\subsection{Solving the Deception Model}\label{subsec:solving_misdirection_model}

Besides fulfilling the primary task, the blue team introduces deceptive perturbations by maximizing~$\E\log L_T$, thereby driving its control~$\beta_t$ toward the deceptive input~$F^b_t \mathbf{v}_t + F^c_t \mathbf{p}_t + \mathbf{f}^d_t$.
This yields a bi-objective optimization problem with the following cost to be minimized:
\begin{equation}\label{eq:blue_team_cost}
    J^{\mathrm{blue}}(\uv) := J^{\mathrm{primary}}(\uv) - \lambda\mathbb{E}\log L_T,
\end{equation}
where $\lambda\geq 0$ is the intensity of deception.
By incorporating the summands in~\eqref{eqn:E_log_LT} into the running cost, we obtain a new Markovian control problem with the same state dynamics and terminal cost, but the modified running cost
\begin{multline}
    h_t(\mathbf{v},\mathbf{p},\alpha,\beta) = r_t(\mathbf{v},\mathbf{p},\alpha,\beta)\\
    + \frac{\lambda}{2} \big[\big\|{\beta-F^b_t \mathbf{v} - F^c_t \mathbf{p}- \mathbf{f}^d_t}\big\|_{(\Sigma_y^2)^{-1}}^2 - \norm{\beta}_{(\Sigma_y^2)^{-1}}^2 \big].
\end{multline}
For the convenience of subsequent calculations, we rewrite \(h_t\) as a function of \(\xv\) and \(\uv\) via
\begin{equation}\label{eq:modified_running_cost}
    h_t(\mathbf{x}, \mathbf{u}) = \frac12\mathbf{x}^TQ_t\mathbf{x} + \frac12\mathbf{u}^T R \mathbf{u} + \mathbf{x}^T N_t \mathbf{u} + \mathbf{q}_t^T\mathbf{x} + \mathbf{r}_t^T\mathbf{u} + d_t,
\end{equation}
where the coefficients are 
\begin{alignat}{2}\label{eq:matrices_for_new_cost}
    Q_t &:= \begin{bmatrix}
        R_v + \lambda(F_t^b)^T(\Sigma_y^2)^{-1}F_t^b & \lambda(F_t^b)^T(\Sigma_y^2)^{-1}F_t^c \\ \lambda(F_t^c)^T(\Sigma_y^2)^{-1}F_t^b & \lambda(F_t^c)^T(\Sigma_y^2)^{-1}F_t^c
    \end{bmatrix}, \,\, \\
    R& := \begin{bmatrix}
        R_\alpha & 0 \\ 0 & R_\beta
    \end{bmatrix}, \quad
    N_t := \begin{bmatrix}
        0& -\lambda(F_t^b)^T(\Sigma_y^2)^{-1} \\0&  -\lambda(F_t^c)^T(\Sigma_y^2)^{-1}
    \end{bmatrix},& \\
    \mathbf{q}_t& := \begin{bmatrix}
        -R_v\bar{\mathbf{v}}_t + \lambda(F_t^b)^T(\Sigma_y^2)^{-1}\mathbf{f}_t^d\\  \lambda(F_t^c)^T(\Sigma_y^2)^{-1}\mathbf{f}_t^d
    \end{bmatrix}, \
    \mathbf{r}_t := \begin{bmatrix}
        0 \\ -\lambda(\Sigma_y^2)^{-1}\mathbf{f}_t^d
    \end{bmatrix},  &\\
    d_t& := \frac12\bar{\mathbf{v}}_t^T{R_v}\bar{\mathbf{v}}_t+ \frac{\lambda}{2}(\mathbf{f}_t^d)^T(\Sigma_y^2)^{-1}\mathbf{f}_t^d.
\end{alignat}
Importantly, the augmented running cost \(h_t\) consists only of linear and quadratic terms in the states and controls, thus preserving the LQ structure as in \eqref{eq:state_dynamics}--\eqref{eq:primary_task}. 
Thanks to the analytical tractability of LQ control problems, we proceed to derive a semi-explicit solution for the optimal deceptive control using dynamic programming.

Define the associated value function~$V:[T]\times \mathbb{R}^{2n} \to \mathbb{R}$ as
\begin{equation}\label{eq:def_value_func}
    V_t(\mathbf{x}) := \inf_{\mathbf{u}}\,\mathbb{E} \Big[\sum_{k=t}^{T-1} h_k(\mathbf{x}_k,\mathbf{u}_k) + g(\mathbf{x}_T)\mid \mathbf{x}_t = \mathbf{x}\Big],
\end{equation}
for which we propose a quadratic ansatz
\begin{equation}
    \label{eqn:ansatz}
    V_t(\mathbf{x}) = \frac12\mathbf{x}^T P_t \mathbf{x} + \mathbf{s}_t^T \mathbf{x} + c_t \quad
    \textnormal{ for all } t\in[T],
\end{equation}
where 
$\{P_t\}_{t \in [T]}$, $\{\mathbf{s}_t\}_{t \in [T]}$, and $\{c_t\}_{t \in [T]}$ 
are deterministic sequences of matrices, vectors, and scalars, respectively.
By the Bellman optimality equation, the value function \(V\) satisfies 
\begin{align}\label{eq:value_function}
    V_t(\mathbf{x}) & = \inf_{\mathbf{u}}\,\mathbb{E}\left[h_t(\mathbf{x}, \mathbf{u}) + V_{t+1}(A\mathbf{x} + B\mathbf{u}+ \mathbf{w}_t)\right] \\
    & = \inf_{\mathbf{u}}\Big[\frac12\mathbf{x}^TM_t\mathbf{x} + \frac12\mathbf{u}^TH_t \mathbf{u} + \mathbf{x}^TG_t\mathbf{u}+ (\mathbf{q}_t^T+\mathbf{s}_{t+1}^TA)\mathbf{x} \\
    & \quad + (\mathbf{r}_t^T+ \mathbf{s}_{t+1}^TB)\mathbf{u} + d_t + c_{t+1} +\frac{1}{2}\mathbb{E}(\mathbf{w}_t^TP_{t+1}\mathbf{w}_t)\Big],
\end{align}
where~$M_{t} := Q_t+A^TP_{t+1}A$, $H_{t} := R+B^TP_{t+1}B$, and $ G_{t} := N_t+A^TP_{t+1}B$. Solving for the infimum yields the optimal Markovian deceptive control 
\begin{equation}\label{eq:optimal_control}
    \mathbf{u}_t^* = -H_{t}^{-1}(G_{t}^T\mathbf{x}_t + \mathbf{r}_t + B^T\mathbf{s}_{t+1}),
\end{equation}
whose existence relies on the invertibility of \(H_t\), which is guaranteed in Theorem~\ref{thm:well_posedness} under an appropriate range of \(\lambda\).
Importantly, \(\mathbf{u}_t^*\) is linear in the state process~$\xv_t$, which justifies the form of the controllers 
in the hypotheses in~\eqref{eq:h0def} and~\eqref{eq:h1def}.

Substituting~\eqref{eq:optimal_control} into~\eqref{eq:value_function} yields the backward recursions
\begin{align}\label{eq:backward_recursion}
    P_{t} &= M_{t} - G_{t}H_{t}^{-1}G_{t}^T, \\
    \mathbf{s}_t &= \mathbf{q}_t + A^T\mathbf{s}_{t+1} - G_{t}H_{t}^{-1}(\mathbf{r}_t + B^T\mathbf{s}_{t+1}), \\
    c_{t} &= d_t+c_{t+1} - \frac12(\mathbf{r}_t + B^T\mathbf{s}_{t+1})^TH_{t}^{-1}(\mathbf{r}_t + B^T\mathbf{s}_{t+1})\\& \qquad\qquad\qquad\qquad\qquad+\frac{1}{2}\mathrm{Tr}\Big(P_{t+1}\begin{bmatrix}\Sigma_z^2 & \mathbf{0}_{n\times n}\\\mathbf{0}_{n\times n} & \Sigma_y^2\end{bmatrix}\Big),
\end{align}
with given terminal conditions
\begin{equation}\label{eq:terminal_conditions}
    P_T = \begin{bmatrix}
        T_v & \mathbf{0}_{n\times n} \\\mathbf{0}_{n\times n} & \mathbf{0}_{n\times n}
    \end{bmatrix}, \ \mathbf{s}_T = \begin{bmatrix}
        -T_v^T\bar{\mathbf{v}}_T \\ \mathbf{0}_{n\times 1}
    \end{bmatrix}, \  c_T = \frac{1}{2}\bar{\mathbf{v}}_T^TT_v\bar{\mathbf{v}}_T,
\end{equation}
which provide a semi-explicit characterization of the blue team's control design.

Theorem~\ref{thm:well_posedness} analyzes the solvability of the backward recursions in~\eqref{eq:backward_recursion} and provides a sufficient condition for the existence of the optimal deceptive control in~\eqref{eq:optimal_control}.

\begin{theorem}[Global well-posedness]\label{thm:well_posedness}
    For any given~\(T\in \mathbb{N}\), when \(\lambda\) takes a small enough value such that~$I_n - \lambda (\Sigma_y^2)^{-\frac12}R_\beta^{-1}(\Sigma_y^2)^{-\frac12}\succeq 0$, \(H_t\) is invertible for any \(t\in[T]\), and there exists a unique solution to~\eqref{eq:backward_recursion}.
\end{theorem}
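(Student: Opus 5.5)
The plan is a backward induction on $t$ with the hypothesis that $P_{t+1}$ is symmetric positive semidefinite. Since $B=I_{2n}$, we have $H_t = R + P_{t+1}$, and $R=\mathrm{diag}(R_\alpha,R_\beta)\succ 0$. So $P_{t+1}\succeq 0$ immediately gives $H_t\succ 0$, and in particular $H_t$ is invertible. Once $H_t$ is invertible at every step, the recursions \eqref{eq:backward_recursion} are explicit formulas run backward from the terminal data \eqref{eq:terminal_conditions}. Existence and uniqueness of $\{P_t,\mathbf{s}_t,c_t\}$ are therefore automatic. The entire content of the theorem thus reduces to propagating $P_t\succeq 0$ (and symmetry) backward from $P_T=\mathrm{diag}(T_v,0)\succeq 0$.

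The first step, which I expect to be the crux, is to show that the quadratic part of the running cost $h_t$ is jointly positive semidefinite in $(\xv,\uv)$. Concretely, I want
\begin{equation}
\begin{bmatrix} Q_t & N_t\\ N_t^T & R\end{bmatrix}\succeq 0 .
\end{equation}
Write $S:=(\Sigma_y^2)^{-1}$ and $F_t:=[F^b_t\ \ F^c_t]$. Reading off from the definition of $h_t$ rather than from the block entries, the homogeneous quadratic part equals
\begin{equation}
\tfrac12\alpha^TR_\alpha\alpha+\tfrac12\mathbf{v}^TR_v\mathbf{v}+\tfrac12\beta^T(R_\beta-\lambda S)\beta+\tfrac{\lambda}{2}\norm{\beta-F_t\xv}_S^2 .
\end{equation}
The first, second and last terms are nonnegative because $R_\alpha$, $R_v$ are positive definite and $\lambda\ge 0$. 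For the third term, the hypothesis $I_n-\lambda S^{1/2}R_\beta^{-1}S^{1/2}\succeq 0$ is equivalent to $S^{-1}\succeq\lambda R_\beta^{-1}$, obtained by congruence with $S^{-1/2}$. By operator monotonicity of inversion (treating $\lambda=0$ trivially), this is in turn equivalent to $R_\beta\succeq\lambda S$. Hence the whole form is nonnegative.

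In the inductive step, assume $P_{t+1}\succeq 0$ is symmetric. Then
\begin{equation}
\begin{bmatrix} M_t & G_t\\ G_t^T & H_t\end{bmatrix}=\begin{bmatrix} Q_t & N_t\\ N_t^T & R\end{bmatrix}+\begin{bmatrix} A^T\\ B^T\end{bmatrix}P_{t+1}\begin{bmatrix} A & B\end{bmatrix}\succeq 0,
\end{equation}
and its lower-right block $H_t$ is positive definite. By the Schur complement lemma, $P_t=M_t-G_tH_t^{-1}G_t^T\succeq 0$. Equivalently, $\tfrac12\xv^TP_t\xv=\min_{\uv}$ of a nonnegative quadratic form. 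Symmetry of $P_t$ follows from the symmetry of $M_t$ and $H_t$. This closes the induction for $t=T-1,\dots,0$ and hence gives invertibility of every $H_t$.

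Finally, uniqueness holds because each of $P_t$, $\mathbf{s}_t$, $c_t$ is a single-valued function of the quantities at time $t+1$. The main obstacle is the joint semidefiniteness in the first step, since the cross terms $N_t$ and the negative $-\lambda S$ contribution make $Q_t$ alone insufficient. Completing the square in $\beta-F_t\xv$ is exactly what converts the stated spectral condition into $R_\beta-\lambda S\succeq 0$. Note that only semidefiniteness is needed there, because strict definiteness of $H_t$ comes from $R_\alpha\succ 0$ and $R_\beta\succ 0$ directly.
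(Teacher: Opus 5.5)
Your proof is correct. It follows the same three-stage skeleton as the paper: joint positive semidefiniteness of the quadratic part of $h_t$, then $P_t\succeq 0$, then $H_t=R+P_{t+1}\succ 0$. However, you carry out the first two stages differently.

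In the first stage, the paper computes $Q_t-N_tR^{-1}N_t^T$ explicitly. It equals $\mathrm{diag}(R_v,\mathbf{0}_{n\times n})$ plus a congruence of $W(\lambda)=\lambda(\Sigma_y^2)^{-\frac12}[I_n-\lambda(\Sigma_y^2)^{-\frac12}R_\beta^{-1}(\Sigma_y^2)^{-\frac12}](\Sigma_y^2)^{-\frac12}$, so the hypothesis enters verbatim. You instead complete the square in $\beta-F^b_t\mathbf{v}-F^c_t\mathbf{p}$. This requires translating the hypothesis into $R_\beta\succeq\lambda(\Sigma_y^2)^{-1}$, which you do correctly, and in return it makes clear why this is exactly the right condition.

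The more substantive difference is the second stage. The paper argues by contradiction through the value function: a negative direction $\yv_0$ of $P_{t_0}$ would make the quadratic ansatz $V_{t_0}(a\yv_0)$ negative for large $a$, contradicting nonnegativity of the cost-to-go. You instead run an explicit backward induction. You write $\begin{bmatrix}M_t&G_t\\G_t^T&H_t\end{bmatrix}$ as the cost block matrix plus $\begin{bmatrix}A&B\end{bmatrix}^TP_{t+1}\begin{bmatrix}A&B\end{bmatrix}$ and take its Schur complement.

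Your route is purely algebraic and self-contained, and it makes explicit a step the paper leaves tacit. Identifying $\tfrac12\xv^TP_{t_0}\xv+\mathbf{s}_{t_0}^T\xv+c_{t_0}$ with $V_{t_0}$ already presupposes that the recursion is well-defined with $H_t\succ 0$ for all $t\geq t_0$. Otherwise the infimum in the Bellman step need not be attained or even finite. So an induction of your kind is implicitly needed there anyway.

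The paper's argument gives, in exchange, the control-theoretic reading that $P_t\succeq 0$ because the cost-to-go is bounded below. This is the global version of your one-step remark that $\tfrac12\xv^TP_t\xv$ is the minimum over $\uv$ of a nonnegative quadratic form.
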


\begin{proof}
See Appendix~\ref{apdx:pf_well_posedness}.
\end{proof}

Notably, the well-posedness condition reduces to $\lambda\in[0,R_\beta\Sigma_y^2]$ in the one-dimensional case (i.e., $n=1$). 
Based on this well-posedness result, Corollary~\ref{cor:two_special_cases} examines two special cases of the resulting one-dimensional deception model, when \(\lambda\) takes values at the boundary of the admissible range.

\begin{corollary}\label{cor:two_special_cases}
    For the one-dimensional (i.e., \(n=1\)) deception model given by~\eqref{eq:state_dynamics} and~\eqref{eq:blue_team_cost},
    when~\(\lambda =0\), the optimal deceptive control~\(\beta^* \equiv 0\);
    when~\(\lambda =R_\beta\Sigma_y^2\), the optimal deceptive control~\(\beta^*\) exactly aligns with hypothesis~\(\mathsf{H}_1\), i.e.,~\(\beta^*_t = F^b_t\mathbf{v}_t + F^c_t\mathbf{p}_t + \mathbf{f}^d_t\).    
    In addition, the first component of~$\uv^*_t$, namely~$\alpha^*_t$, is identical in both cases.
\end{corollary}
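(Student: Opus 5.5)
The plan is to avoid working with the matrix recursions~\eqref{eq:backward_recursion} directly. Instead I would complete the square in the $\beta$-dependent part of the running cost, reparametrize the control, and read off the structure of the value function by backward induction. Set $n=1$, write $\sigma^2:=\Sigma_y^2>0$, and let $f_t(\mathbf{v},\mathbf{p}):=F^b_t\mathbf{v}+F^c_t\mathbf{p}+\mathbf{f}^d_t$.

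\textbf{Step 1: complete the square.} In the modified running cost $h_t$, the only terms involving $\beta$ are
\begin{equation}
\tfrac12 R_\beta\beta^2+\tfrac{\lambda}{2\sigma^2}\big[(\beta-f_t)^2-\beta^2\big].
\end{equation}
For $\lambda=0$ this is $\tfrac12R_\beta\beta^2$. For $\lambda=R_\beta\sigma^2$ the two $\beta^2$ terms cancel and it equals $\tfrac12R_\beta(\beta-f_t)^2$.

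In both cases I introduce the shifted control $\tilde\beta_t:=\beta_t-\gamma f_t(\mathbf{v}_t,\mathbf{p}_t)$, with $\gamma=0$ or $\gamma=1$ respectively. This change of variables is a bijection between Markovian controls. In the new variables the running cost becomes
\begin{equation}
\tfrac12R_\alpha\alpha^2+\tfrac12R_\beta\tilde\beta^2+\tfrac12R_v(\mathbf{v}-\bar{\mathbf{v}}_t)^2,
\end{equation}
which is the same expression for both values of $\lambda$. The terminal cost $g$ depends only on $\mathbf{v}$. The $\mathbf{v}$-dynamics $\mathbf{v}_{t+1}=\alpha_t+\mathbf{v}_t+Z_t$ involve neither $\mathbf{p}$ nor $\beta$. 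The $\mathbf{p}$-dynamics change, but $\mathbf{p}$ never enters the cost.

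\textbf{Step 2: backward induction on the value function.} I would show by backward induction on $t$ that $V_t$ depends on $\mathbf{x}=(\mathbf{v},\mathbf{p})$ only through $\mathbf{v}$. Equivalently, $P_t=\mathrm{diag}(P^v_t,0)$ and the second component of $\mathbf{s}_t$ vanishes.

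The base case is the terminal condition~\eqref{eq:terminal_conditions}. For the inductive step, assume $V_{t+1}$ depends only on $\mathbf{v}$. Then $\mathbb{E}\,V_{t+1}(\mathbf{x}_{t+1})$ depends only on $(\mathbf{v},\alpha)$. The Bellman minimization therefore separates into a problem over $\alpha$, which does not involve $\mathbf{p}$ or $\tilde\beta$, and the problem $\min_{\tilde\beta}\tfrac12R_\beta\tilde\beta^2$. The latter has the unique minimizer $\tilde\beta^*=0$ because $R_\beta>0$. Hence $V_t$ depends only on $\mathbf{v}$, which closes the induction. Translating back gives $\beta^*_t=\gamma f_t(\mathbf{v}_t,\mathbf{p}_t)$: this is $\beta^*\equiv0$ when $\lambda=0$, and exactly the $\mathsf{H}_1$ law when $\lambda=R_\beta\sigma^2$.

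\textbf{Step 3: the $\alpha$ component.} The reduced $(\mathbf{v},\alpha)$ problem has the same dynamics, costs and terminal condition for both values of $\lambda$. Its scalar Riccati-type recursion for $P^v_t$ and the associated linear terms are therefore identical, and so $\alpha^*_t$ coincides in the two cases.

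\textbf{Step 4: consistency with~\eqref{eq:optimal_control}, which I expect to be the main obstacle.} The delicate point is that $\lambda=R_\beta\sigma^2$ lies on the boundary of the admissible range in Theorem~\ref{thm:well_posedness}. I need to confirm that the controller obtained above is the one given by~\eqref{eq:optimal_control}, and not merely some minimizer.

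Here I rely on two facts. First, the boundary value is still admissible in Theorem~\ref{thm:well_posedness}, since its condition is $\succeq0$, so $H_t$ is invertible and the recursion solution is unique. Second, $H_t=R+P_{t+1}$ because $B=I$. Given the block structure of $P_{t+1}$ from Step 2, $H_t=\mathrm{diag}(R_\alpha+P^v_{t+1},R_\beta)$, which is positive definite. The Bellman objective is therefore strictly convex in $\mathbf{u}$, its minimizer is unique, and it must coincide with~\eqref{eq:optimal_control}.

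To make this airtight, I would verify directly that the block-diagonal $P_t$ and the $\mathbf{s}_t$ with zero second component satisfy~\eqref{eq:backward_recursion}, and then invoke uniqueness. This requires substituting $Q_t$, $N_t$, $\mathbf{q}_t$, $\mathbf{r}_t$ with $\lambda=R_\beta\sigma^2$ and checking that the $\mathbf{p}$-row entries cancel in $P_t=M_t-G_tH_t^{-1}G_t^T$. This cancellation is the computation I expect to need the most care.
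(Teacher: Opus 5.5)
Your proof is correct, but it establishes the sparsity pattern by a different mechanism than the paper. The paper also runs a backward induction on $P_t=\mathrm{diag}(\rho_t,0)$, $\mathbf{s}_t=(\eta_t,0)^T$. It verifies the inductive step by substituting $Q_t,N_t,\mathbf{q}_t,\mathbf{r}_t$ into \eqref{eq:backward_recursion}, finding that every $\mathbf{p}$-entry of $P_t$ and $\mathbf{s}_t$ is a multiple of $\frac{\lambda}{\Sigma_y^2}\big(1-\frac{\lambda}{R_\beta\Sigma_y^2}\big)$, which vanishes at both endpoints. It then reads $\beta^*_t=\frac{\lambda}{R_\beta\Sigma_y^2}(F^b_t\mathbf{v}_t+F^c_t\mathbf{p}_t+\mathbf{f}^d_t)$ and $\alpha^*_t=-(\rho_{t+1}\mathbf{v}_t+\eta_{t+1})/(R_\alpha+\rho_{t+1})$ off \eqref{eq:optimal_control}. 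Your completion of the square explains where that factor comes from. For arbitrary $\lambda$, the $\beta$-part of $h_t$ equals $\frac12R_\beta\big(\beta-\frac{\lambda}{R_\beta\Sigma_y^2}f_t\big)^2+\frac{\lambda}{2\Sigma_y^2}\big(1-\frac{\lambda}{R_\beta\Sigma_y^2}\big)f_t^2$; the quadratic part of the residual is the $\bar F_t^TW(\lambda)\bar F_t$ term from the proof of Theorem~\ref{thm:well_posedness}. The two endpoints are exactly where this $\mathbf{p}$-dependent residual disappears, so that the shifted problem decouples. Your route buys insight and generality: it shows why the endpoints are special, makes the equality of $\alpha^*_t$ immediate because the two reduced problems are literally identical, and extends verbatim to $n>1$ whenever $\lambda(\Sigma_y^2)^{-1}=R_\beta$. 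The paper's route buys explicit scalar recursions for $\rho_t,\eta_t$, and it works directly with the recursion that defines \eqref{eq:optimal_control}, without having to identify the ansatz with the true value function. Finally, your Step~4 is not the obstacle you expect. Once the inductive hypothesis gives $P_{t+1}=\mathrm{diag}(P^v_{t+1},0)$ with $P^v_{t+1}\ge 0$ (the reduced costs are nonnegative), you get $H_t\succ 0$. Then \eqref{eq:optimal_control} is by construction the unique minimizer of the Bellman objective, which your change of variables has already identified, and the induction closes. The entrywise cancellation you plan as an extra check is essentially the paper's whole proof, and it is redundant in yours.
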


\begin{proof}
See Appendix~\ref{apdx:pf_two_special_cases}.
\end{proof}

As implied by Corollary~\ref{cor:two_special_cases}, our deception model provides a continuous interpolation between full commitment to the primary task and full commitment to deception, with \(\lambda\) interpreted as the interpolation parameter.
In particular, when the deception intensity~\(\lambda =R_\beta\Sigma_y^2\) attains its maximum value within the admissible range, the primary and deception tasks become effectively decoupled.
A multi-dimensional extension of Corollary~\ref{cor:two_special_cases} can be established in a similar way.

To quantitatively measure the amount of deception introduced, we define the deception measure
\begin{equation}\label{eq:misdirection_measurement}
    \DeceptionMeasure(\lambda) := \frac{d(\boldsymbol\beta^{\mathsf{H}_0}, \boldsymbol\beta^*(\lambda))}{d(\boldsymbol\beta^{\mathsf{H}_0}, \boldsymbol\beta^{\mathsf{H}_1})},
\end{equation}
where $d:\R^{n T}\times\R^{n T}\ni (\mathbf{a},\mathbf{b}) \mapsto \sqrt{\sum_{t=0}^{T-1} \norm{\mathbf{a}_t-\mathbf{b}_t}_2^2}\in \R_+$,~${\beta}^{\mathsf{H}_0}$ (resp. ${\beta}^{\mathsf{H}_1}$) denotes the optimal controller~$\beta^*$ under~$\mathsf{H}_0$ (resp.~$\mathsf{H}_1$), and~$\beta^*(\lambda)$ denotes the optimal controller $\beta^*$ induced by  a given deception intensity~$\lambda$. 
Intuitively, $\DeceptionMeasure$ measures the extent to which~$\beta^*(\lambda)$ shifts away from~$\beta^{\mathsf{H}_0}$ towards~$\beta^{\mathsf{H}_1}$ for a given value of~$\lambda$, thereby quantifying the amount of deception introduced. 
Using such notations, Corollary~\ref{cor:two_special_cases} implies~$\DeceptionMeasure(0) = 0$ and~$\DeceptionMeasure(R_\beta\Sigma_y^2) = 1$ when $n=1$.

\section{The Stealthy Deception Problem}\label{sec:stealthy_misdirection}

This section addresses Problem~\ref{prob:stealthy_misdirection}.
As noted in the Introduction, we consider a red team
that knows the blue team's deception patterns 
$\{F^b_t,F^c_t,\mathbf{f}^d_t\}_{t \in [T-1]}$.
This worst-case scenario for the blue team represents what a sophisticated adversary may know (e.g.,
through an intelligence source) and enables the analysis of deception under worst-case conditions.
The red team uses SPRT to test if the blue team is introducing deceptive perturbations based on such knowledge.
Specifically, given publicly known error probabilities~$a,b$, the red team computes the thresholds \(U\) and \(L\) according to Lemma~\ref{lem:sprt_parameters}, and implements the SPRT decision rule specified in Definition~\ref{def:sprt}.
Anticipating the red team's inference routine, the blue team adjusts its deception intensity~\(\lambda\), subject to the well-posedness condition (cf. Theorem~\ref{thm:well_posedness}),
to try to keep its 
deception undetected. 

To develop the blue team's strategy for selecting~$\lambda$, we analyze the SPRT statistic~$\log L^*_t:= \log L_t(\mathbf{x}^*)$ under the blue team's optimal deceptive control~$\mathbf{u}^*$ (cf.~\eqref{eq:optimal_control}), where~$\xv^*$ denotes the state process induced by~$\uv^*$.
Given the red team's SPRT decision rule (cf. Definition~\ref{def:sprt}), the blue team selects~\(\lambda\) such that at any time~\(t\), the probability that the red team rejects~\(\mathsf{H}_0\) is at most~\(\EPS\), i.e.,
\begin{equation}
\label{eq:stealthiness_condition}
\mathbb{P}(\log L^*_t \geq \log \upperthreshold) \leq \epsilon \quad \textnormal{ for all } t \in [T], 
\end{equation}
where~\(0<\EPS<<1\) is a prescribed detection tolerance level.
In other words, the red team is likely unable to draw a statistically significant conclusion that deceptive perturbations are present in the blue team's trajectories.
Clearly, a larger value of~\(\EPS\) yields a larger admissible set of~\(\lambda\), but may lead to less stealthy deception.

\subsection{Sampling-Based Approximation}\label{subsec:sampling_based_approximation}

When \(\EPS\) does not take a very small value, we numerically compute \(\lambda\) with
a grid search over its admissible range (cf. Theorem~\ref{thm:well_posedness}).
For a given \(\lambda\), the probability \(\mathbb{P}(\log L^*_t\geq \log \upperthreshold)\) can be approximated via Monte Carlo (MC) simulation~\cite{harrison2010introduction}, and the satisfaction of the constraint in~\eqref{eq:stealthiness_condition} can be checked using these approximations.
We refer to this methodology as the ``sampling-based approach'' for stealthy deception. The following lemma provides confidence intervals for our sampling-based approximations.

\begin{lemma}[Score Confidence Interval \cite{agresti1998approximate}]\label{lem:confidence_interval}
    For a Monte Carlo estimator $\hat{p}$ of \(p\) based on \(n\) independent samples, the \((1-c)\)-confidence interval is given by
    \begin{equation}\label{eq:score_confidence_interval}
        \Big(\frac{\hat{p} + \frac12\gamma - \theta}{1+\gamma}, \frac{\hat{p} + \frac12\gamma + \theta}{1+\gamma} \Big),
    \end{equation}      
    where the significance level \(c\in(0,1)\), 
    $z_{c/2}:= \mathcal{Q}^{-1}(\frac{c}{2})$, $\gamma:=z^2_{c/2}/n$, and~$\theta:=z_{c/2} \sqrt{({\hat{p}(1-\hat{p})+ z_{c/2}^2/4n})/{n}}$.   
\end{lemma}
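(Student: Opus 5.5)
The plan is the classical Wilson score construction: invert the asymptotic normal pivot of the Monte Carlo proportion. Write $\hat p = \frac1n\sum_{i=1}^n \mathbb{I}_{\{\text{event occurs in sample } i\}}$, where in our use the event is $\{\log L_t^*\ge \log\upperthreshold\}$. Then $n\hat p$ is binomial with parameters $n$ and $p$, since the samples are independent. The lemma's ``$(1-c)$-confidence'' is understood in the usual asymptotic sense: coverage tends to $1-c$ as $n\to\infty$. I will prove that statement.

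\textbf{Step 1 (pivot).} By the central limit theorem, when $p\in(0,1)$ the score statistic $(\hat p-p)/\sqrt{p(1-p)/n}$ converges in distribution to $\mathcal{N}(0,1)$. Here the variance is evaluated at the true $p$, not at $\hat p$. By symmetry of the Gaussian and the definition $z_{c/2}=\mathcal{Q}^{-1}(c/2)$, we get
\begin{equation}
\mathbb{P}\Big(|\hat p-p|\le z_{c/2}\sqrt{p(1-p)/n}\Big)\to 1-c .
\end{equation}
So the set $C:=\{p\in[0,1]: (\hat p-p)^2\le z_{c/2}^2\,p(1-p)/n\}$ is an asymptotic $(1-c)$-confidence set.

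\textbf{Step 2 (inversion).} With $\gamma=z_{c/2}^2/n$, the defining inequality of $C$ is equivalent to the quadratic inequality
\begin{equation}
(1+\gamma)p^2-(2\hat p+\gamma)p+\hat p^2\le 0 .
\end{equation}
The leading coefficient $1+\gamma$ is positive, so $C$ is the closed interval between the two real roots. Its discriminant is
\begin{equation}
(2\hat p+\gamma)^2-4(1+\gamma)\hat p^2=4\gamma\big(\hat p(1-\hat p)+\gamma/4\big)\ge 0 .
\end{equation}
Hence the roots are $\big(\hat p+\tfrac12\gamma\pm\sqrt{\gamma}\sqrt{\hat p(1-\hat p)+\gamma/4}\big)/(1+\gamma)$. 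Substituting $\sqrt\gamma=z_{c/2}/\sqrt n$ and $\gamma/4=z_{c/2}^2/(4n)$ turns the square-root term into exactly $\theta$. This reproduces the endpoints in \eqref{eq:score_confidence_interval}.

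\textbf{Step 3 (details).} Two checks remain. First, both roots lie in $[0,1]$: evaluating the quadratic at $p=0$ gives $\hat p^2\ge0$, and at $p=1$ gives $(1-\hat p)^2\ge 0$, while the vertex lies in $[0,1]$. So intersecting $C$ with $[0,1]$ is harmless. Second, whether the interval is open or closed does not affect asymptotic coverage, because the limit law is continuous.

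The only genuinely non-algebraic step is Step 1. The main subtlety is that coverage is asymptotic, not exact for finite $n$. It also degenerates at $p\in\{0,1\}$, where the pivot is undefined; there the event is trivial, and the interval still contains $p$ whenever $\hat p=p$. I will cite the binomial CLT (de Moivre–Laplace) for Step 1, as in \cite{agresti1998approximate}, rather than reprove it.
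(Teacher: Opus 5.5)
Your argument is correct and is the standard Wilson score-interval derivation: invert the normal approximation to $(\hat p-p)/\sqrt{p(1-p)/n}$, with the variance evaluated at the true $p$, and solve the resulting quadratic. This is exactly the construction in the cited source \cite{agresti1998approximate}, and the paper gives no proof beyond that citation.

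One small caveat: your remark that open versus closed endpoints is immaterial holds only for $p\in(0,1)$. At $p=0$ one has $\hat p=0$ almost surely, so $\theta=\gamma/2$ and the lower endpoint is exactly $0$. The open interval as literally written therefore never covers $p$ there (and symmetrically at $p=1$), whereas your closed set $C$ does.
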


Notably, this confidence interval provides an upper bound 
for~$\mathbb{P}(\log L^*_t\geq \log \upperthreshold)$
with high confidence, even when the event~\(\{\log L^*_t\geq \log \upperthreshold\}\) is not observed in the samples.
As a result, the sampling-based approach produces a numerical range of values of~\(\lambda\) under which the blue team's deception remains stealthy (i.e., \eqref{eq:stealthiness_condition} holds) with high confidence.

While the sampling-based approach provides a practical estimate of~\(\mathbb{P}(\log L^*_t\geq \log \upperthreshold)\), it is subject to numerical error and does not offer deterministic theoretical guarantees.
Moreover, accurate estimation becomes challenging when~\(\EPS\) is sufficiently small (e.g.,~$\epsilon\ll 10^{-3}$, see~\cite{au2001estimation}), as the event of interest becomes rare.
These limitations motivate the subsequent development of an analytical probabilistic bound.

\subsection{Probabilistic Bound}\label{sec:bound}

To facilitate the forthcoming
theoretical analysis, we introduce the following assumption without loss of generality.

\begin{assumption}
    \label{assu:sd_x0}
     In the hypothesis \(\mathsf{H}_1\) (cf.~\eqref{eq:h1def}), \(\mathbf{f}^d\equiv 0\).
\end{assumption}

Assumption~\ref{assu:sd_x0} implies that \(\mathbf{r} \equiv 0\) (cf.~\eqref{eq:matrices_for_new_cost}). 
Since $\mathbf{f}_t^d$ only contributes to a deterministic component in~\eqref{eq:log_L_T}, the subsequently developed probabilistic bound can be easily extended to the general case where $\mathbf{f}^d\not\equiv 0$. 

The analytical bound for~\(\mathbb{P}(\log L^*_t\geq \log \upperthreshold)\) established below is primarily based on concentration inequalities (of Hanson–Wright type \cite{vershynin2018high}), namely that~\(\log L^*_t\) does not deviate significantly from its mean~\(\E \log L^*_t\).
Consequently, the following technical result is required, which provides a tractable expression for~$\mathbb{E}\log L_t^*$.

\begin{lemma}\label{prop:E_log_L_t_star} 
    Under Assumption~\ref{assu:sd_x0}, $\xv_k^*\sim\mathcal{N}(m^*_k, \Sigma^*_k)$ for all~$k \in [T]$, where $m_k^*$ and $\Sigma_k^*$ satisfy the recursions 
    \begin{align}
        \label{eq:mean_x_k}
        &m^*_k = K_{k-1}^Tm^*_{k-1}- H^{-1}_{k-1}\mathbf{s}_k,\\
        &\Sigma^*_k = K_{k-1}^T(\Sigma^*_{k-1}+m^*_{k-1}(m^*_{k-1})^T)K_{k-1}- K_{k-1}^Tm^*_{k-1}\mathbf{s}_k^TH_{k-1}^{-1} \\
        \label{eq:cov_x_k}
        &\hspace{-0.75em}-H_{k-1}^{-1}\mathbf{s}_k(m^*_{k-1})^TK_{k-1}+ H_{k-1}^{-1}\mathbf{s}_k\mathbf{s}_k^TH_{k-1}^{-1}+ \Sigma_w- m^*_k(m^*_k)^T,
    \end{align}
    with given initial conditions~$m_0^* = \mathbf{x}_0$ and~$\Sigma_0^* = \mathbf{0}_{2n\times2n}$.
    Here,~$K_{k-1} := A^T - G_{k-1}H^{-1}_{k-1}$ and~$\Sigma_w := \mathrm{diag}(\Sigma_z^2, \Sigma_y^2)$.
    
    In addition,~$\E\log L_t^*$ can be represented as
    \begin{multline}\label{eqn:E_log_L_star}
        \mathbb{E}\log L_t^* = -\frac12\sum_{k=0}^{t-1}\Big\{2\mathbf{s}_{k+1}^TH_k^{-1}E_2(\Sigma_y^2)^{-1}F_k^T m_k^* \\
         +\mathrm{Tr}\Big[[F_k(\Sigma_y^2)^{-1}F_k^T + 2G_kH_k^{-1}E_2(\Sigma_y^2)^{-1}F_k^T]\\
         [\Sigma^*_k + m^*_k(m^*_k)^T]\Big]\Big\},
    \end{multline}
    where~$ E_2 := \begin{bmatrix}\mathbf{0}_{n\times n} & I_n\end{bmatrix}^T$ and~$ F_k := \begin{bmatrix}F^b_k & F^c_k\end{bmatrix}^T$.
\end{lemma}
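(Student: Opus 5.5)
The plan is to make the closed-loop dynamics explicit under Assumption~\ref{assu:sd_x0}, propagate first and second moments by induction, and then substitute the optimal control into the representation~\eqref{eqn:E_log_LT} of $\E\log L_t$.

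\emph{Closed loop.} Since $\mathbf{f}^d\equiv 0$, we have $\mathbf{r}\equiv 0$. With $B=I_{2n}$, the optimal control~\eqref{eq:optimal_control} becomes $\uv^*_k=-H_k^{-1}(G_k^T\xv^*_k+\mathbf{s}_{k+1})$. This is well defined by Theorem~\ref{thm:well_posedness}. Substituting it into~\eqref{eq:matrix_form_state_dynamics} gives
\begin{equation}
\xv^*_{k+1}=(A-H_k^{-1}G_k^T)\xv^*_k-H_k^{-1}\mathbf{s}_{k+1}+\wv_k .
\end{equation}
$H_k=R+P_{k+1}$ is symmetric, because $R$ is symmetric and $P_{k+1}$ is symmetric by the recursion~\eqref{eq:backward_recursion} with symmetric terminal condition. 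Hence $A-H_k^{-1}G_k^T=(A^T-G_kH_k^{-1})^T=K_k^T$.

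\emph{Gaussianity and moments.} By induction, $\xv^*_k$ is an affine function of $\wv_0,\dots,\wv_{k-1}$, so it is Gaussian. The base case is $\xv^*_0=\xv_0$ deterministic, with $\Sigma_0^*=0$. Moreover $\wv_k$ is independent of $\xv^*_k$, centered, and has covariance $\Sigma_w$. Taking expectations in the closed-loop equation gives~\eqref{eq:mean_x_k}. For the covariance, I will compute the second moment
\begin{equation}
\E[\xv^*_{k+1}(\xv^*_{k+1})^T]=K_k^T(\Sigma_k^*+m_k^*(m_k^*)^T)K_k-K_k^Tm_k^*\mathbf{s}_{k+1}^TH_k^{-1}-H_k^{-1}\mathbf{s}_{k+1}(m_k^*)^TK_k+H_k^{-1}\mathbf{s}_{k+1}\mathbf{s}_{k+1}^TH_k^{-1}+\Sigma_w,
\end{equation}
where the cross terms involving $\wv_k$ vanish. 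Subtracting $m_{k+1}^*(m_{k+1}^*)^T$ and shifting the index gives~\eqref{eq:cov_x_k}.

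\emph{Expectation of the log-likelihood.} Write $S:=(\Sigma_y^2)^{-1}$. Under $\mathbf{f}^d=0$ we have $F^b_k\mathbf{v}_k+F^c_k\mathbf{p}_k=F_k^T\xv_k$, so~\eqref{eqn:E_log_LT} expands to
\begin{equation}
\E\log L^*_t=\tfrac12\sum_{k=0}^{t-1}\E\big[2\beta_k^{*T}SF_k^T\xv^*_k-(\xv^*_k)^TF_kSF_k^T\xv^*_k\big].
\end{equation}
Next, insert $\beta^*_k=E_2^T\uv^*_k=-E_2^TH_k^{-1}(G_k^T\xv^*_k+\mathbf{s}_{k+1})$, again using the symmetry of $H_k$. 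The cross term becomes
\begin{equation}
-2(\xv^*_k)^TG_kH_k^{-1}E_2SF_k^T\xv^*_k-2\mathbf{s}_{k+1}^TH_k^{-1}E_2SF_k^T\xv^*_k .
\end{equation}
The quadratic forms are evaluated with $\E[\xv^T M\xv]=\mathrm{Tr}[M(\Sigma_k^*+m_k^*(m_k^*)^T)]$, and the linear term uses $\E\xv^*_k=m_k^*$. Collecting these and factoring out $-\tfrac12$ yields~\eqref{eqn:E_log_L_star}.

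\emph{Main obstacle.} The only delicate step is bookkeeping rather than conceptual. It consists of checking that $H_k$ is symmetric so that the transpose identifying $K_k^T$ is legitimate, and keeping the time indices consistent: $\mathbf{s}_{k+1}$ pairs with $H_k$, and the shift $k\to k-1$ in the stated recursions must be done correctly. I will also verify that the trace of the non-symmetric matrix $G_kH_k^{-1}E_2SF_k^T$ against the symmetric second-moment matrix is correctly represented, which requires no symmetrization since $\mathrm{Tr}(MX)=\E[\xv^TM\xv]$ holds for any square $M$.
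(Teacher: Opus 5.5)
Your proposal is correct and follows essentially the same route as the paper. You derive the closed-loop recursion $\xv^*_{k+1}=K_k^T\xv^*_k-H_k^{-1}\mathbf{s}_{k+1}+\wv_k$ (using the symmetry of $H_k$), propagate the first and second moments, and substitute $\beta^*_k=E_2^T\uv^*_k$ together with $\E[\xv^TM\xv]=\mathrm{Tr}(M\,\E[\xv\xv^T])$. The only cosmetic difference is that you start from the already-averaged identity \eqref{eqn:E_log_LT}, whereas the paper works with the pathwise expression \eqref{eq:log_L_T} and discards the $Y_k$ cross term by independence at the end.
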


\begin{proof}
See Appendix~\ref{apdx:pf_E_log_L_t_star}.
\end{proof}

To prepare for stating the probabilistic bound on $\mathbb{P}(\log L^*_t \geq \log \upperthreshold)$, we define a vector
\begin{multline}
        \mathcal{L}^1_t := \mathrm{Concat}\Big(\mathbb{I}_{\{k\geq 0\}} E_2(\Sigma_y^2)^{-1}F_k^T\mathcal{D}_{k-1}\\ - \mathbb{I}_{\{k\leq t-2\}}\sum_{j=k+1}^{t-1}\mathcal{C}_{k+1,j-1}^T\Big[F_j(\Sigma_y^2)^{-1}E_2^TH_j^{-1}\mathbf{s}_{j+1}\\  \qquad\quad + \mathcal{S}_j^s\mathcal{D}_{j-1}\Big],
        \ \forall -1\leq k\leq t-1\Big)\in \R^{2n(t+1)}, 
    \end{multline}
and a matrix~$\mathcal{L}_t^{2}\in \R^{2n(t+1)\times 2n(t+1)}$ consisting of \((t+1)^2\) blocks of $2n \times 2n$ matrices, with its $(i,j)^{th}$ block defined as
    \begin{multline}\label{eq:L_2_component}
        (\mathcal{L}^2_t)_{ij} := \frac{1}{2}\big(2\mathbb{I}_{\{0\leq i\leq t-1,-1\leq j\leq i-1\}}E_2(\Sigma_y^2)^{-1}F_i^T\mathcal{C}_{j+1,i-1} \\ - \mathbb{I}_{\{i\vee j\leq t-2\}}\sum_{k=i\vee j+ 1}^{t-1}\mathcal{C}_{i+1,k-1}^T\mathcal{S}_k^T\mathcal{C}_{j+1,k-1}\big)\in \R^{2n\times2n},
    \end{multline}
    for $i,j\in\{-1,0,\dots, t-1\}$.
    Here, the coefficients are defined in terms of the model parameters as
    \begin{align}
        &\mathcal{C}_{j+1,k-1} :=K_{k-1}^T\ldots K_{j+1}^T\in \mathbb{R}^{2n\times 2n}, \\ 
        &\mathcal{D}_{k-1} := -\sum_{j=0}^{k-1}\mathcal{C}_{j+1,k-1}H_j^{-1}\mathbf{s}_{j+1}\in \R^{2n},\\
        &\mathcal{S}_k := F_k(\Sigma_y^2)^{-1}F_k^T + 2G_kH_k^{-1}E_2(\Sigma_y^2)^{-1}F_k^T\in \mathbb{R}^{2n\times 2n}.
    \end{align}
    
    In Theorem~\ref{thm:probabilistic_bound}, the probabilistic bound is explicitly in terms of the above quantities.

\begin{theorem}\label{thm:probabilistic_bound}
    Under Assumption~\ref{assu:sd_x0},
    \begin{multline}
        \mathbb{P}(\log L^*_t \geq \log \upperthreshold)\leq \mathrm{exp}\Big\{-\frac{\EPS_1^2}{2\|(\Sigma^w_t)^{\frac12}(\mathcal{L}^1_t + 2\mathcal{L}^{2,s}_tm^w_t)\|_2^2}\Big\} \\
        + \mathrm{exp}\Big\{-\frac18\Big(\frac{\EPS_2^2}{\|(\Sigma^w_t)^{\frac12}\mathcal{L}^{2,s}_t(\Sigma^w_t)^{\frac12}\|_F^2} \wedge \frac{\EPS_2}{\|(\Sigma^w_t)^{\frac12}\mathcal{L}^{2,s}_t(\Sigma^w_t)^{\frac12}\|_2}\Big)\Big\},
    \end{multline}
    for any $\EPS_1, \EPS_2>0$ that satisfy $\EPS_1+\EPS_2 = \log \upperthreshold - \E\log L^*_t$,
    where \(\E\log L^*_t\) can be explicitly computed using the model parameters according to Lemma~\ref{prop:E_log_L_t_star}.
    Here, $\mathcal{L}_t^{2,s} := [\mathcal{L}_t^{2} + (\mathcal{L}_t^{2})^T]/2$ is the symmetric part of $\mathcal{L}_t^{2}$ and 
    \begin{align}
        &m^w_t :=  \mathrm{Concat}(m^*_0,0,\ldots,0)\in \R^{2n(t+1)},\\
        &\Sigma^w_t :=  \mathrm{diag}(\Sigma^*_0,\Sigma_z^2,\Sigma_y^2,\ldots,\Sigma_z^2,\Sigma_y^2)\in \mathbb{R}^{2n(t+1)\times 2n(t+1)}.
    \end{align}
\end{theorem}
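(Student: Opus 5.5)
The plan is to write $\log L^*_t$ as a quadratic form in a single Gaussian vector, $\log L^*_t = (\mathcal{L}^1_t)^T W_t + W_t^T\mathcal{L}^2_t W_t + \text{const}$, and then to bound its two fluctuating parts separately.

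\textbf{Step 1: the Gaussian vector.} Collect all randomness into $W_t := \mathrm{Concat}(\xv^*_0,\wv_0,\ldots,\wv_{t-1})\in\R^{2n(t+1)}$. Under Assumption~\ref{assu:sd_x0} we have $\mathbf{r}\equiv0$, so the closed loop from~\eqref{eq:optimal_control} reads
\begin{equation}
\xv^*_{k+1} = K_k^T\xv^*_k - H_k^{-1}\mathbf{s}_{k+1}+\wv_k,
\end{equation}
which is the same recursion that gives Lemma~\ref{prop:E_log_L_t_star}. Unrolling it gives
\begin{equation}
\xv^*_k = \mathcal{C}_{0,k-1}\xv^*_0 + \mathcal{D}_{k-1} + \sum_{j=0}^{k-1}\mathcal{C}_{j+1,k-1}\wv_j .
\end{equation}
Here the block index $-1$ in $\mathcal{L}^1_t,\mathcal{L}^2_t$ corresponds to $\xv^*_0$. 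Then $W_t\sim\mathcal{N}(m^w_t,\Sigma^w_t)$ with the stated mean and covariance.

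\textbf{Step 2: the quadratic form.} Start from~\eqref{eq:log_L_T} and set $\mathbf{p}_{k+1}-\mathbf{v}_k-\mathbf{p}_k = \beta^*_k+Y_k$ with $Y_k=E_2^T\wv_k$. Expanding the difference of squared norms gives, for each $k$,
\begin{equation}
(\beta^*_k+Y_k)^T(\Sigma_y^2)^{-1}F_k^T\xv^*_k - \tfrac12 (\xv^*_k)^TF_k(\Sigma_y^2)^{-1}F_k^T\xv^*_k ,
\end{equation}
where $\beta^*_k = -E_2^TH_k^{-1}(G_k^T\xv^*_k+\mathbf{s}_{k+1})$. This splits into three pieces:
\begin{itemize}
\item a part quadratic in $\xv^*_k$, namely $-\tfrac12(\xv^*_k)^T\mathcal{S}_k\xv^*_k$ after symmetrizing the $G_k$ term;
\item a part linear in $\xv^*_k$ with coefficient $-F_k(\Sigma_y^2)^{-1}E_2^TH_k^{-1}\mathbf{s}_{k+1}$;
\item a cross term $\wv_k^TE_2(\Sigma_y^2)^{-1}F_k^T\xv^*_k$.
\end{itemize}
Substituting the unrolled expression for $\xv^*_k$ and collecting by blocks of $W_t$ should reproduce exactly $\mathcal{L}^1_t$ and $\mathcal{L}^2_t$, plus a deterministic constant. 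The cross term produces the first (lower-triangular) part of $(\mathcal{L}^2_t)_{ij}$ and the $E_2(\Sigma_y^2)^{-1}F_k^T\mathcal{D}_{k-1}$ part of $\mathcal{L}^1_t$. The sums over $k\ge i\vee j+1$ come from the $\mathcal{S}_k$ quadratic. I do not need to track the constant, because it cancels once I centre at $\E\log L^*_t$, which Lemma~\ref{prop:E_log_L_t_star} supplies.

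\textbf{Step 3: centring and splitting.} Write $W_t = m^w_t + (\Sigma^w_t)^{1/2}\xi$ with $\xi\sim\mathcal{N}(0,I)$, and replace $\mathcal{L}^2_t$ by $\mathcal{L}^{2,s}_t$, which does not change the quadratic form. Then
\begin{equation}
\log L^*_t-\E\log L^*_t = \underbrace{(\mathcal{L}^1_t+2\mathcal{L}^{2,s}_tm^w_t)^T(\Sigma^w_t)^{1/2}\xi}_{X_1} + \underbrace{\xi^T M\xi-\E\,\xi^TM\xi}_{X_2},
\end{equation}
where $M=(\Sigma^w_t)^{1/2}\mathcal{L}^{2,s}_t(\Sigma^w_t)^{1/2}$. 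Since $\EPS_1+\EPS_2=\log\upperthreshold-\E\log L^*_t$, a union bound gives
\begin{equation}
\mathbb{P}(\log L^*_t\ge\log\upperthreshold)\le \mathbb{P}(X_1\ge\EPS_1)+\mathbb{P}(X_2\ge\EPS_2).
\end{equation}
The first probability is a Gaussian tail with variance $\|(\Sigma^w_t)^{1/2}(\mathcal{L}^1_t+2\mathcal{L}^{2,s}_tm^w_t)\|_2^2$, which gives the first exponential via $\mathcal{Q}(x)\le e^{-x^2/2}$. The second is handled by a Hanson--Wright inequality for standard Gaussians with explicit constants: $\mathbb{P}(X_2\ge s)\le\exp\{-\tfrac18(s^2/\|M\|_F^2\wedge s/\|M\|_2)\}$. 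I would prove this by diagonalizing $M$, reducing $X_2$ to a weighted sum of centred $\chi^2_1$ variables, and applying the Laurent--Massart-type bound on their MGF. Some eigenvalues of $M$ may be negative, but this is harmless since only the upper tail is needed.

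\textbf{Main obstacle.} The hard part is Step 2: the index bookkeeping that matches the expanded sum to the stated blocks of $\mathcal{L}^1_t$ and $\mathcal{L}^2_t$, including the indicator ranges, the $k=-1$ block, and the $\mathcal{C}^T$ transposes. I would first verify it for $t=1,2$ and then induct on $t$. A secondary check is that the constant of $1/8$ in the Hanson--Wright step is achievable. It is, using $\log\E e^{\mu(g^2-1)}\le \mu^2/(1-2\mu)$ for $\mu<1/2$ with the choice $\mu\le 1/(4\|M\|_2)$.
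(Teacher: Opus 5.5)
Your proposal is correct and is essentially the paper's proof: the same quadratic-form representation of $\log L^*_t$ in $\mathrm{Concat}(\xv_0,\wv_0,\ldots,\wv_{t-1})$, the same centring and union bound, a Gaussian tail for the linear part, and the same $1/8$-constant Hanson--Wright variant (diagonalize, reduce to centred $\chi^2_1$ variables) for the quadratic part. Your parametrization $m^w_t+(\Sigma^w_t)^{1/2}\xi$ also handles the singular block $\Sigma^*_0=0$ more cleanly than the paper's Lemma~\ref{lem:HW}, which assumes a positive definite covariance; one small fix is needed, though: $\log\E e^{\mu(g^2-1)}\le\mu^2/(1-2\mu)$ is false for every $\mu<0$, and negative eigenvalues of $M$ do enter the MGF even for the upper tail, so for them use $\log\E e^{\mu(g^2-1)}\le\mu^2$ (or the paper's $-\log(1-u)-u\le u^2$ for $|u|\le\tfrac12$), which still gives $\log\E e^{\phi X_2}\le 2\phi^2\|M\|_F^2$ for $0\le\phi\le 1/(4\|M\|_2)$ and hence the constant $1/8$.
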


\begin{proof}
See Appendix~\ref{apdx:pf_probabilistic_bound}.
\end{proof}

The bound in Theorem~\ref{thm:probabilistic_bound} represents a worst-case estimate of the probability that the red team detects the blue team's deception. 
Based on this result, the blue team can perform a grid search to find admissible values of~$\lambda$ under which the constraint~\eqref{eq:stealthiness_condition} is satisfied.
We remark that the resulting choice of \(\lambda\) may be conservative, as the concentration bound in Theorem~\ref{thm:probabilistic_bound} may not be tight in all cases.

%% file: 4-Results-E.tex
\section{Numerical Experiments}\label{sec:results}

This section addresses Problem~\ref{prob:empirical_verification}.
We first demonstrate the impact of deception on the blue team's state trajectory, 
and then we discuss the selection of the deception intensity $\lambda$ to avoid detection by the red team.
Then, we verify the stealthiness of deception under the chosen $\lambda$, and explore
how~$\lambda$ changes in terms of the detection tolerance~$\epsilon$.

For all experiments, we adopt the following set of model parameters unless otherwise specified:
\begin{equation} \label{eq:sim_parameters}
    \begin{aligned}
    &n = 1, \ T=20,\ \Sigma_z^2 = \Sigma_y^2 = 0.05,\ \mathbf{v}_0 = 1, \ \mathbf{p}_0 = 4, \\ 
     &\ R_\alpha = 1, \ R_\beta = 10,\ R_v = 1, \ T_v = 1, \ \bar{\mathbf{v}}_t = 1 - t/T, \\
    &\ \ F^b \equiv 0.5, \ F^c \equiv -0.1,\ \mathbf{f}^d \equiv 0, \ a=b=0.01.
    \end{aligned}
\end{equation}

\subsection{Performance of the Optimal Deceptive Control}\label{sec:result_misdirection}

In this section, we demonstrate the impact of the optimal deceptive control \(\mathbf{u}^*\) (cf. \eqref{eq:optimal_control}) on the blue team's trajectories. Figure~\ref{fig:trajectories_with_misdirection} compares the blue team's state and control trajectories under different values of \(\lambda\). 
As observed, in the baseline case (\(\lambda = 0\)), deception does not exist (i.e., \(\beta^*\equiv 0\)), which aligns with the theoretical insights from Corollary~\ref{cor:two_special_cases}. 
When $\lambda>0$, the state and control trajectories deviate from the baseline ones, indicating the presence of deception.
A larger value of~$\lambda$ leads to stronger deception, as reflected by the larger magnitude of \(\beta^*\).

Figure~\ref{fig:primary_cost_with_lambda} exhibits the trade-off between the primary cost 
and the deception measure~\eqref{eq:misdirection_measurement} under different values of $\lambda$. 
In what follows, the primary cost refers to the cost (cf.~\eqref{eq:primary_cost}) evaluated along a single simulated trajectory.
As~$\lambda$ increases, both the primary cost and the deception measure increase.
A higher deception intensity allocates more control effort to deception, thereby reducing the effort devoted to minimizing~$J^{\mathrm{primary}}$.

Figure~\ref{fig:logLt_with_lambda} plots the trajectories of $\log L_t^*$ computed by the red team.
In the baseline case, $\log L_t^*$ takes highly negative values, indicating the red team's acceptance of $\mathsf{H}_0$.
When~\(\lambda\) takes small values (e.g., 0.1),~$\log L_t^*$ never reaches~\(\log U\), implying that the blue team's deception remains undetected.
In contrast, when~\(\lambda\) further increases (e.g.,~\(\lambda\geq 0.2\)),~$\log L_t^*$ reaches the threshold~\(\log U\), enabling the red team to conclude that deceptive perturbations are present.

\begin{figure}[htbp]
    \centering
    \begin{subfigure}{0.48\textwidth}
        \includegraphics[width=\linewidth]{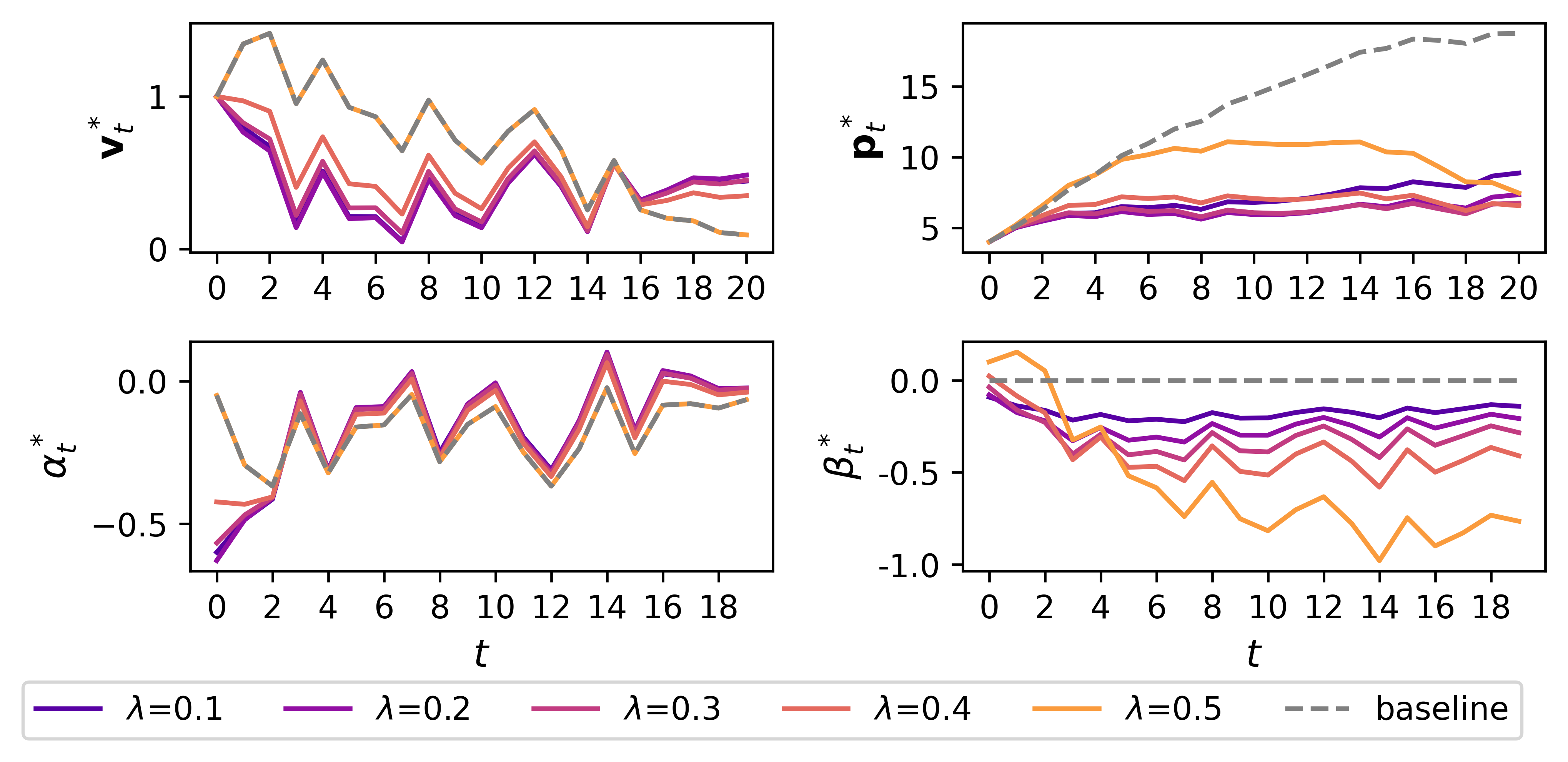}
        \caption{The blue team's optimal state and control trajectories.}
        \label{fig:trajectories_with_misdirection}
    \end{subfigure}
    \begin{subfigure}{0.23\textwidth}
        \includegraphics[width=\linewidth]{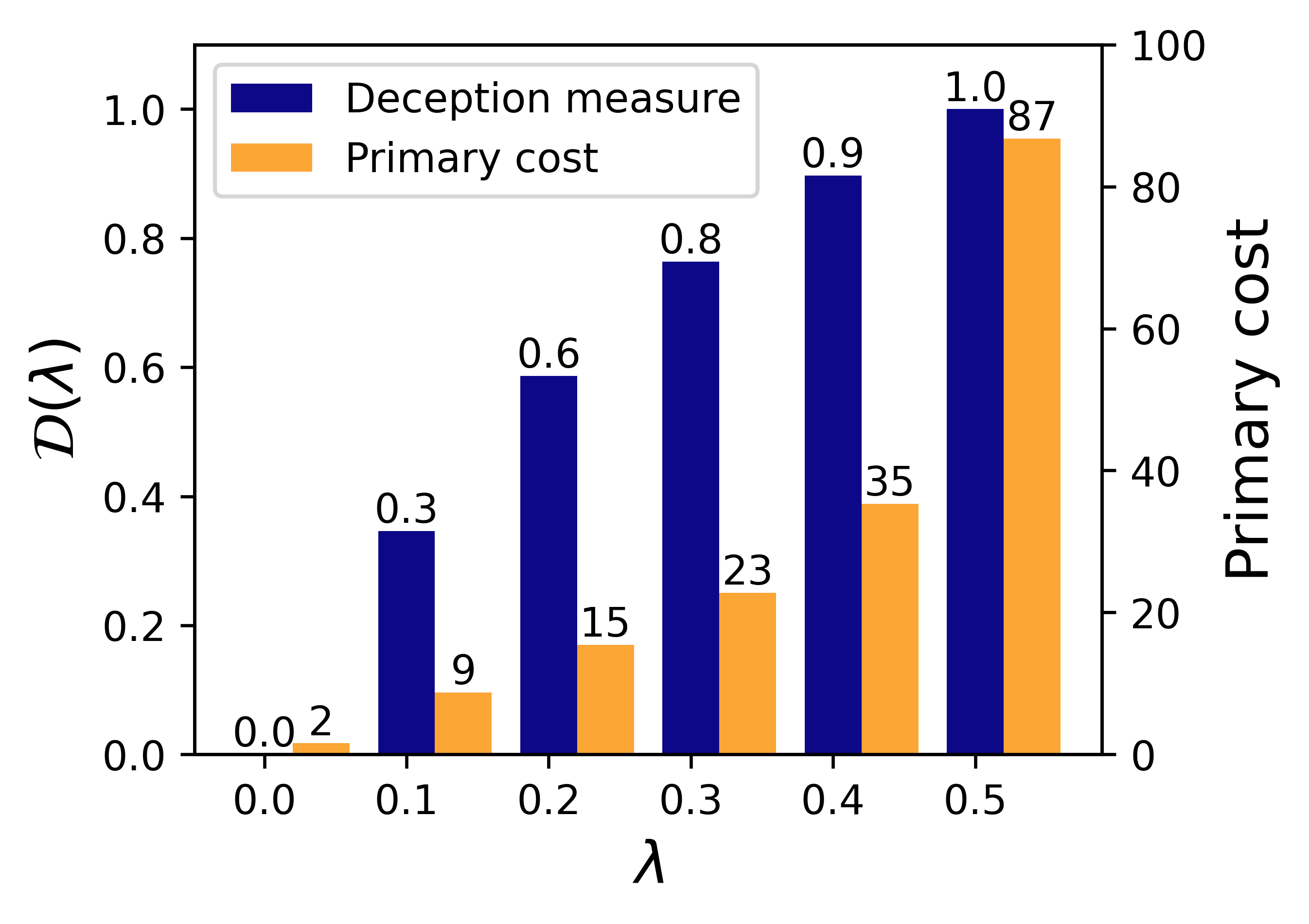}
        \caption{Values of the deception measure and the primary cost.}
        \label{fig:primary_cost_with_lambda}
    \end{subfigure}
    \begin{subfigure}{0.22\textwidth}
        \includegraphics[width=\linewidth]{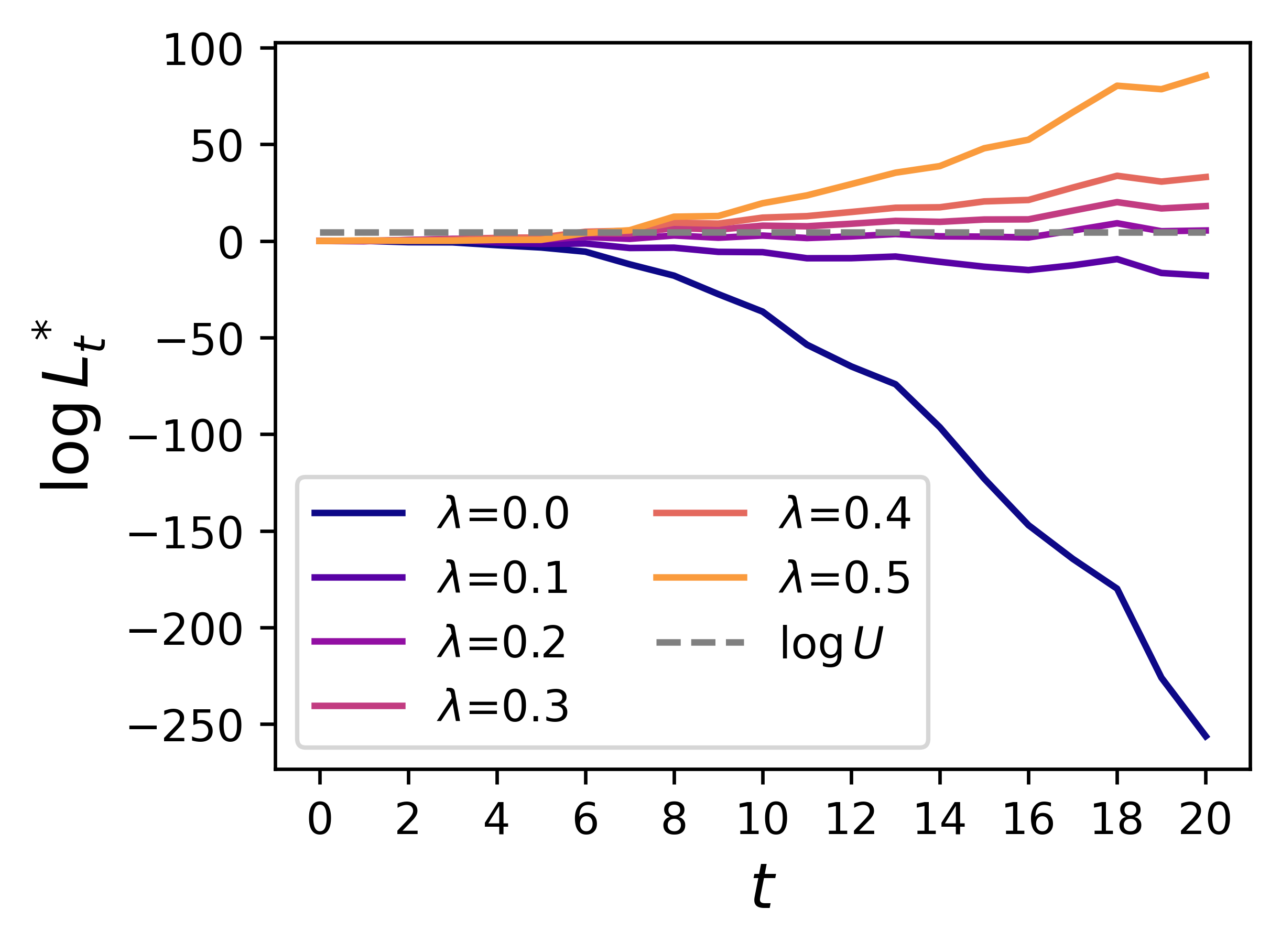}
        \caption{Trajectories of $\log L^*_t$ with detection threshold $\log \upperthreshold$.}
        \label{fig:logLt_with_lambda}
    \end{subfigure}
    \caption{
    Comparison of trajectories, primary costs, and deception measures under different values of~$\lambda$.
    Trajectories are generated by identical realizations of \(\{\mathbf{w}_t\}_{t\in[T-1]}\).}
    \label{fig:misdirection_with_lambda}
\end{figure}

\subsection{Intensity Selection for Stealthy Deception} \label{sec:result_stealthiness}

In this section, we demonstrate the blue team's selection strategy for the intensity~$\lambda$ that induces stealthy deception. 
To find~$\lambda$ under which the stealthiness condition~\eqref{eq:stealthiness_condition} holds, 
we perform a grid search over all admissible~\(\lambda\) satisfying the well-posedness condition in Theorem~\ref{thm:well_posedness}.

\begin{figure}[htbp]
    \centering
    \includegraphics[width=1\linewidth]{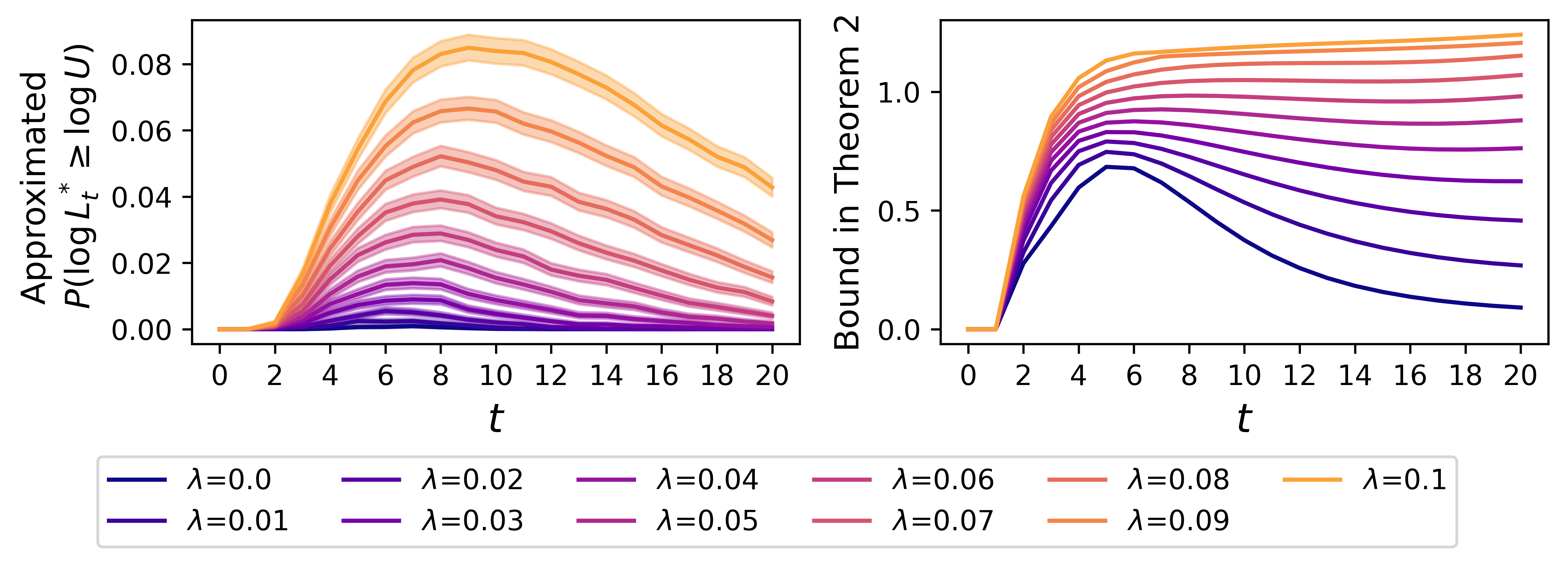}
    \caption{Approximations,~\(95\%\)-confidence intervals, and analytical bounds for~$\mathbb{P}(\log L^*_t\geq\log \upperthreshold)$ in the stealthiness constraint~\eqref{eq:stealthiness_condition}. 
    In the left panel, solid lines represent sampling-based approximations (computed based on~\(20,000\) samples), and colored areas represent~$95\%$-confidence intervals~\eqref{eq:score_confidence_interval}.}
    \label{fig:compare_logLt_and_proabability}
\end{figure}

\begin{figure}[htbp]
    \centering
    \begin{subfigure}{0.48\textwidth}
        \includegraphics[width=\linewidth]{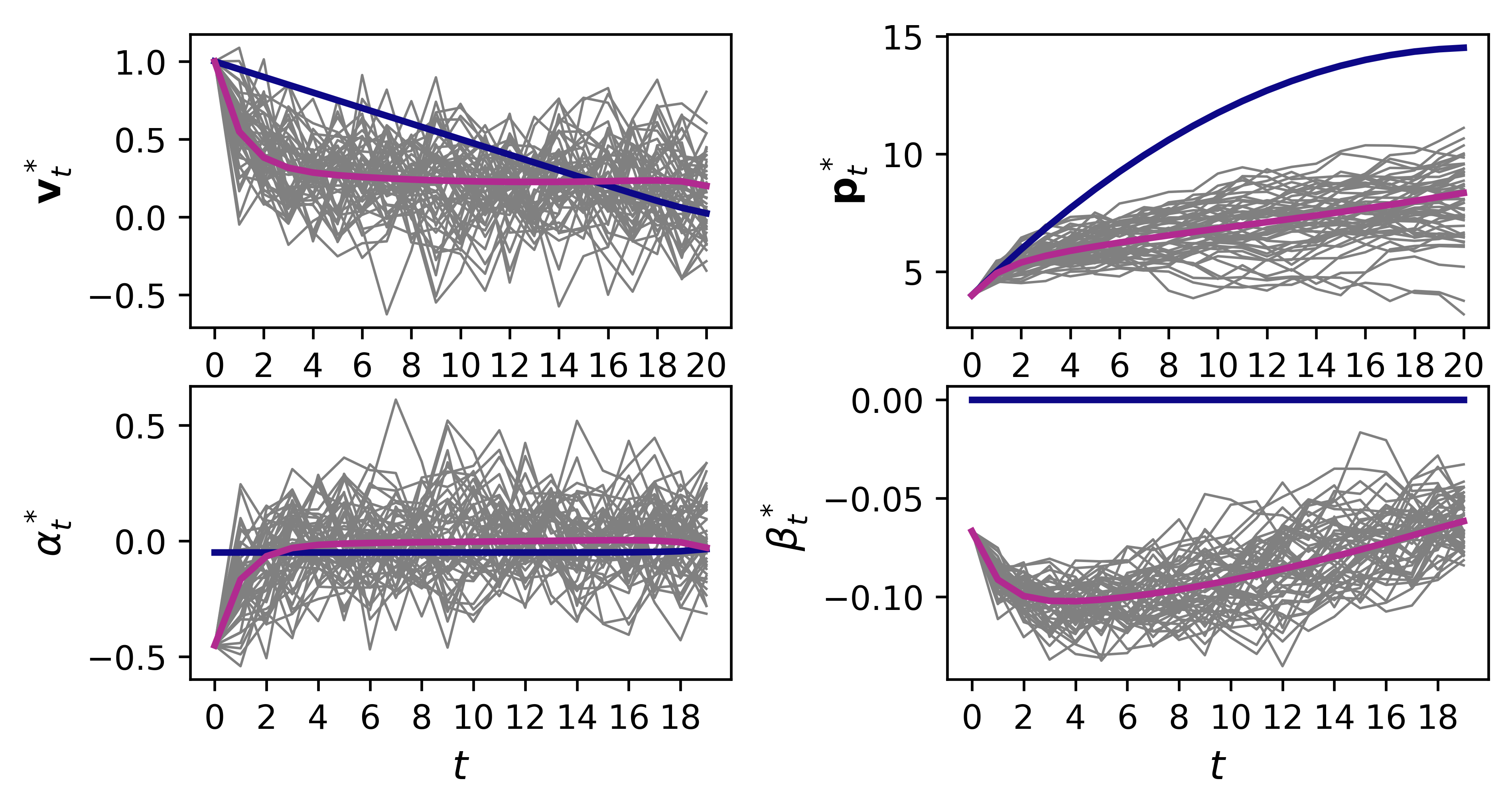}
        \caption{Optimal state and control trajectories.
        }
        \label{fig:random_trajectories}
    \end{subfigure}
    \begin{subfigure}{0.3\textwidth}
        \includegraphics[width=0.9\linewidth, height = 3cm]{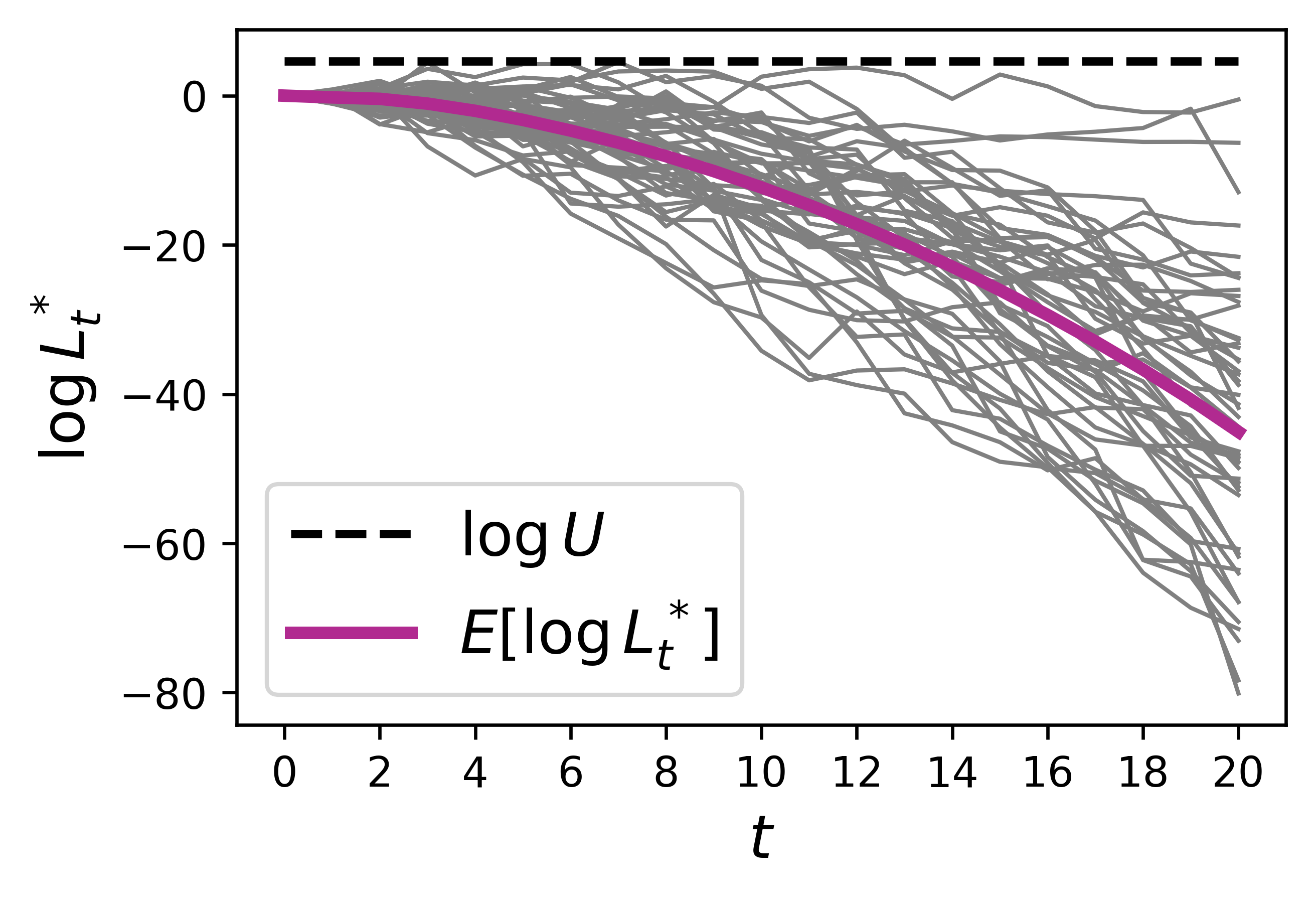}
        \caption{Trajectories of \(\log L^*_t\) under~$\lambda = 0.04$.}
        \label{fig:random_logLt}
    \end{subfigure}
    \caption{Comparison of trajectories generated by $\lambda = 0$ and $\lambda = 0.04$.
    Blue lines represent mean baseline trajectories; red  lines represent mean trajectories under \(\lambda = 0.04\); gray lines represent \(500\) independent trajectories under \(\lambda = 0.04\).}
    \label{fig:random_results_with_fixed_lambda}
\end{figure}

The left panel of Figure~\ref{fig:compare_logLt_and_proabability} presents sampling-based approximations of $\mathbb{P}(\log L^*_t\geq\log \upperthreshold)$ and the associated $95\%$-confidence intervals (cf. Lemma~\ref{lem:confidence_interval}) under different $\lambda\in[0,0.1]$.
As observed, the blue team can select the deception intensity $\lambda = 0.04$, under which the \(95\%\)-confidence interval remains below the detection tolerance $\epsilon = 0.02$.
In other words, the blue team's deceptive perturbations induced by the intensity $\lambda = 0.04$ have
less than a \(2\%\) chance of being detected by the red team at each time step \(t\in[T]\).

The right panel of Figure~\ref{fig:compare_logLt_and_proabability} plots the analytical probabilistic bound established in Theorem~\ref{thm:probabilistic_bound} under the same set of model parameters.
Clearly, the analytical bound is conservative.
For example, using Theorem~\ref{thm:probabilistic_bound}
the blue team's deceptive perturbations induced by the same intensity $\lambda = 0.04$ 
are bounded as having less than a \(90\%\) chance of being detected by the red team.
This lack of tightness mainly comes from using the union bound in the proof of Theorem~\ref{thm:probabilistic_bound}, which neglects the correlation between the linear and quadratic forms.
This approach is adopted to maintain analytical tractability, as capturing such dependencies would lead to significantly more involved technical arguments.
However, this analytical bound is useful when (i) a theoretical verification of the stealthiness constraint is required and/or (ii) the sampling-based approach fails for a sufficiently small~$\epsilon$.

Figure~\ref{fig:random_results_with_fixed_lambda} verifies the effectiveness of the selected intensity~$\lambda = 0.04$, suggested by the sampling-based approach, by plotting the resulting trajectories. 
As observed, the optimal state and control trajectories deviate from their baseline counterparts, while for almost all of the~\(500\) independently generated sample paths,~$\log L^*_t$ never reaches the threshold~$\log \upperthreshold$.
This implies that the blue team has introduced deceptive perturbations into its trajectories, but these perturbations remain stealthy with high probability.

\begin{figure}[htbp]
    \centering
    \includegraphics[width=0.7\linewidth, height = 4cm]{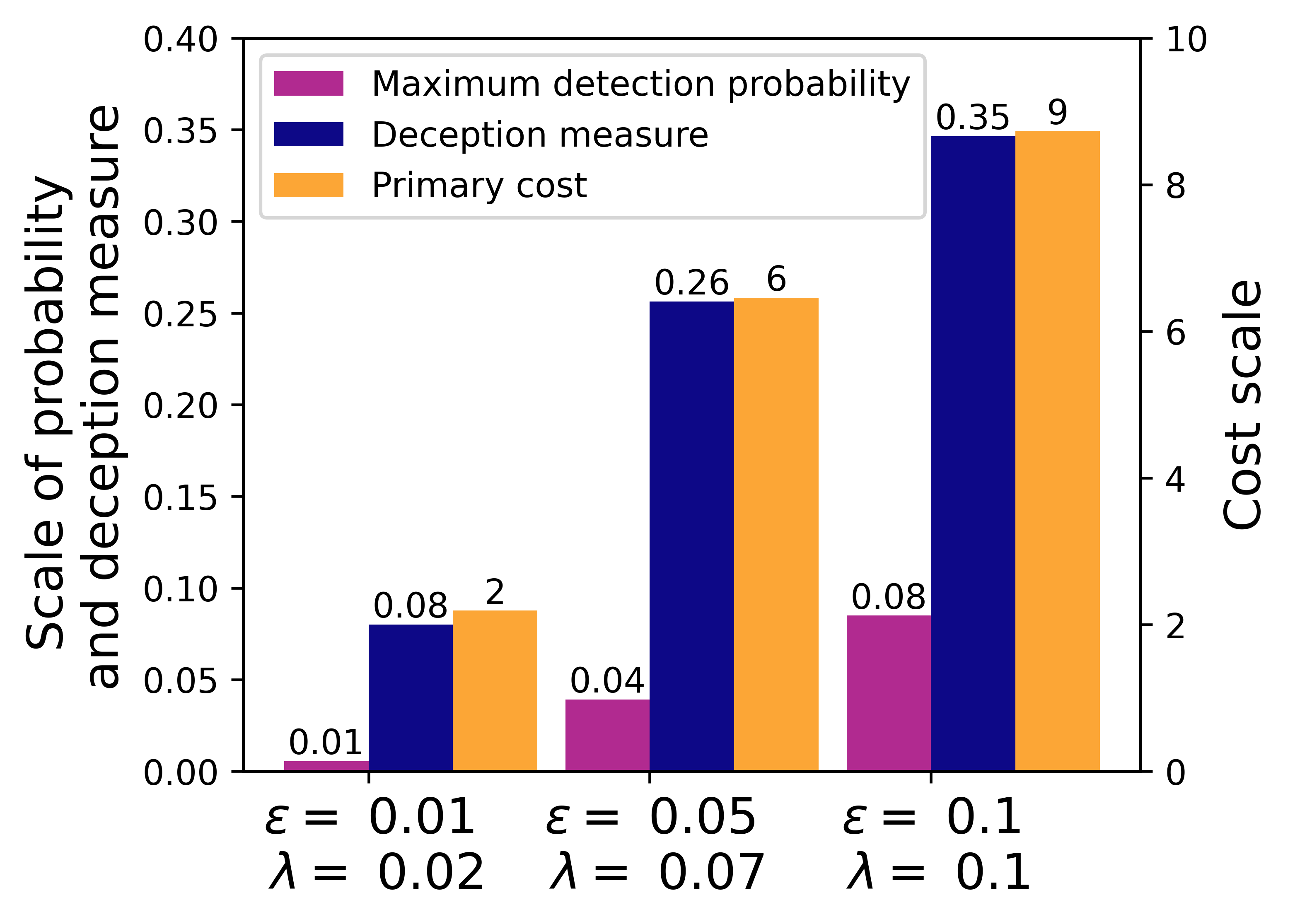}
    \caption{Trade-off among detection probability, deception measure, and primary cost under different detection 
    tolerances \(\EPS\).
    Here, $\lambda$ is chosen as the largest value (identified via the sampling-based approach) that satisfies the stealthiness constraint~\eqref{eq:stealthiness_condition}.
    }
    \label{fig:compare_different_epsilon}
\end{figure}

Lastly, Figure~\ref{fig:compare_different_epsilon} illustrates the trade-off among three important quantities: 
(i) the maximum detection probability~$\max_{t\in[T]}\mathbb{P}(\log L^*_t\geq\log \upperthreshold)$, 
(ii) the deception measure~$\DeceptionMeasure(\lambda)$, and 
(iii) the primary cost, where $\lambda$ is chosen as the largest value satisfying the stealthiness constraint~\eqref{eq:stealthiness_condition} with $95\%$ confidence under different tolerance levels \(\EPS\). 
We observe that, for larger $\epsilon$, the corresponding $\lambda$ is larger, allowing for stronger deception at the price of a higher primary cost and increased detectability.

%% file: 5-Conclusion.tex
\section{Conclusion}\label{sec:conclusion}

In this work, we developed strategies for stealthy deception in linear-quadratic control. 
We augmented the standard LQ cost with a likelihood ratio term
and derived both the optimal control law with dynamic programming. 
For the stealthy deception problem, both a sampling-based approach and a probabilistic bound were developed 
to guide selection of the deception intensity.
Numerical results show that the proposed framework successfully deceives 
with only a limited increase in the primary cost, while maintaining stealthiness with high probability.
Future work will investigate how deception affects the red team’s estimates of the blue team’s data (e.g., its cost), and will also study game settings in which the red team can react to and counter the blue team’s deception.

%% file: 6-Appendix-E.tex
\appendix

\subsection{Supporting Lemmas}\label{apdx:supporting_lemmas}

The following lemmas will be used in Appendix~\ref{apdx:pf_probabilistic_bound} to prove Theorem~\ref{thm:probabilistic_bound}.

\begin{lemma} \label{lem:HW}
    Let \(X\sim \mathcal{N}(\mathbf{0},\Sigma_X)\) be a Gaussian random vector with positive definite covariance matrix \(\Sigma_X\in\mathbb{R}^{n\times n}\).    
    For a given matrix \(A\in\mathbb{R}^{n\times n}\), 
    \begin{multline}
    \mathbb{P}(X^TAX - \E(X^TAX)\geq t) \leq \\
        \exp\left\{{-\frac18\left(\frac{t^2}{\|\Sigma_X^{\frac12}A\Sigma_X^{\frac12}\|_F^2} \wedge \frac{t}{\|\Sigma_X^{\frac12}A\Sigma_X^{\frac12}\|_2}\right)}\right\}
    \end{multline}
    for any $t>0$, where \(\|\cdot\|_2\) denotes the matrix 2-norm and \(\|\cdot\|_F\) denotes the Frobenius norm.
\end{lemma}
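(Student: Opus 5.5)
Lemma~\ref{lem:HW} is a Hanson--Wright-type bound for Gaussian quadratic forms. The plan is to whiten $X$ and reduce to a weighted sum of independent centered $\chi^2_1$ variables, then apply a Chernoff argument.

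First, write $X = \Sigma_X^{1/2} g$ with $g\sim\mathcal{N}(0,I_n)$. Then $X^TAX = g^T \Sigma_X^{1/2} A \Sigma_X^{1/2} g = g^T M g$ with $M := \Sigma_X^{1/2}A\Sigma_X^{1/2}$. Since $g^TMg = g^TM^sg$, we may replace $M$ by its symmetric part $M^s$. Because $\|M^s\|_F\le\|M\|_F$ and $\|M^s\|_2\le\|M\|_2$, a bound in terms of $M^s$ implies the stated one. Diagonalize $M^s = O^T\Lambda O$ with $O$ orthogonal and $\Lambda=\mathrm{diag}(\mu_1,\ldots,\mu_n)$. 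By rotation invariance, $Og\sim\mathcal{N}(0,I_n)$, so
\[
X^TAX-\E(X^TAX) \overset{d}{=} \sum_{i}\mu_i(\xi_i^2-1),
\]
where the $\xi_i$ are i.i.d. standard normal.

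Second, use the Chernoff bound. For $0\le s<1/(2\max_i|\mu_i|)$, the moment generating function is explicit:
\[
\E e^{s\mu_i(\xi_i^2-1)} = e^{-s\mu_i}(1-2s\mu_i)^{-1/2}.
\]
I will use the elementary inequality $-\tfrac12\log(1-x)-\tfrac x2\le x^2/2$ for $|x|\le 1/2$, applied with $x = 2s\mu_i$. This gives $\E e^{s\mu_i(\xi_i^2-1)}\le e^{2s^2\mu_i^2}$ whenever $s\le 1/(4\|M^s\|_2)$. Multiplying over $i$ and using Markov's inequality yields
\[
\mathbb{P}(\cdot\ge t)\le \exp\big(-st+2s^2\|M^s\|_F^2\big).
\]

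Third, optimize over $s\in[0,1/(4\|M^s\|_2)]$. The unconstrained minimizer is $s^*=t/(4\|M^s\|_F^2)$, which gives $\exp(-t^2/(8\|M^s\|_F^2))$. If $s^*$ exceeds the constraint, take $s=1/(4\|M^s\|_2)$ instead. In that case $t\ge \|M^s\|_F^2/\|M^s\|_2$, so $2s^2\|M^s\|_F^2\le st/2$, and the exponent is at most $-st/2 = -t/(8\|M^s\|_2)$. In either case the bound is at most
\[
\exp\Big\{-\tfrac18\big(\tfrac{t^2}{\|M^s\|_F^2}\wedge\tfrac{t}{\|M^s\|_2}\big)\Big\},
\]
which is the claim.

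The main obstacle is the constant bookkeeping: checking the logarithm inequality on the correct interval and making sure the two regimes together produce exactly the factor $1/8$. The case $M^s=0$ is trivial, since the left-hand probability is then $0$.
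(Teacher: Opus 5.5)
Your proof is correct and follows the same route as the paper's: whiten to reduce to identity covariance, pass to the symmetric part and diagonalize, then apply a Chernoff bound with the explicit $\chi^2_1$ moment generating function, the inequality $-\log(1-u)-u\le u^2$ on $|u|\le \tfrac12$, and the choice $s = \tfrac{t}{4\|M^s\|_F^2}\wedge\tfrac{1}{4\|M^s\|_2}$. Your explicit two-case check that this choice gives the factor $\tfrac18$ in both regimes fills in the step the paper compresses into ``plugging in this value.''
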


\begin{proof}
    The proof follows the strategy used to establish the general Hanson–Wright inequality in \cite[Section~6]{vershynin2018high}.
    
    Without loss of generality, we prove the inequality for the special case \(\Sigma_X=I_n\), i.e.,
    \begin{equation}
        \mathbb{P}(X^TAX - \E(X^TAX)\geq t)\leq e^{-\Big(\frac{t^2}{8\|A\|_F^2}\wedge \frac{t}{8\|A\|_2}\Big)},\ \forall t > 0,
    \end{equation}
    where~\(X\sim \mathcal{N}(0,I_n)\).
    The general result can be easily proved by applying the inequality above to \(\Sigma_X^{-\frac12}X\sim \mathcal{N}(0,I_n)\) while setting \(A\) as \(\Sigma^{\frac12}A\Sigma^{\frac12}\).
    
    Firstly, note that~\(X^TAX = X^TA^sX\), where the symmetric part \(A^s\) admits the eigenvalue decomposition \(A^s = W^T\Sigma W\), where~\(W\) is unitary and~\(\Sigma\) is diagonal.
    As a result, \(X^TAX = Y^T\Sigma Y\), where~\(Y := WX\sim \mathcal{N}(0,I_n)\).
    
    Since \(\E X^TAX = \E \sum_{i=1}^n \Sigma_{ii} Y_i^2 = \mathrm{Tr}(\Sigma)\), define~\(Z := X^TAX - \E X^TAX = \sum_{i=1}^n \Sigma_{ii} (Y_i^2 - 1)\). Then, the Chernoff bound~\cite[Lemma~2.3]{zhang2020concentration}
    provides~$\mathbb{P}(Z\geq t)\leq e^{-\phi t}\E e^{\phi Z}$, where~$\E e^{\phi Z} = \prod_{i=1}^ne^{-\phi \Sigma_{ii}}\E e^{\phi  \Sigma_{ii} Y_i^2}$.
    Since~$\E e^{tY_i^2} = \frac{1}{\sqrt{1-2t}}$ 
    for~\(Y_i\sim \mathcal{N}(0,1)\)
    and any $t < \frac12$, $\E e^{\phi\Sigma_{ii} Y_i^2} = ({1-2\phi\Sigma_{ii}})^{-\frac12}$ for any 
    $|\phi| <\frac{1}{2|\Sigma_{ii}|}$.
    Consequently,~\(\E e^{\phi Z} = e^{-\phi \mathrm{Tr}(\Sigma)}\prod_{i=1}^n(1-2\phi\Sigma_{ii})^{-\frac12}\) and
    \begin{multline}
        \log \mathbb{P}(Z\geq t)\leq  -\phi t -  \frac12\sum_{i=1}^n [2\phi \Sigma_{ii} + \log(1-2\phi\Sigma_{ii})] \\ \leq -\phi t + 2\phi^2\sum_{i=1}^n (\Sigma_{ii})^2 
        \textnormal{ for all } |\phi|\leq \frac{1}{4\max_i|\Sigma_{ii}|},
    \end{multline}
    where the inequality follows from $-\log(1-u) - u\leq u^2,\ \forall|u|\leq \frac12$.
    Since~\(\max_i|\Sigma_{ii}| = \|\Sigma\|_2 = \|A^s\|_2\leq \|A\|_2\) and~\(\sum_{i=1}^n (\Sigma_{ii})^2 = \|\Sigma\|_F^2 = \|A^s\|_F^2\leq \|A\|_F^2\), the upper bound is minimized at
    \begin{equation}
        \phi = \frac{t}{4\|A\|_F^2}\wedge \frac{1}{4\|A\|_2}.
    \end{equation}
    Plugging in this value of \(\phi\) yields
    \begin{equation}
        \mathbb{P}(Z\geq t)\leq e^{-(\frac{t^2}{8\|A\|_F^2}\wedge \frac{t}{8\|A\|_2})},
    \end{equation}
    which concludes the proof.
\end{proof}

\begin{lemma}[Gaussian Concentration~\cite{boucheron2013concentration}]
    \label{lem:Gaussian_Concen}
    Let \(X\sim \mathcal{N}(0,I_n)\) be a Gaussian random vector, and let \(F:\R^n\to\R\) be an \(L\)-Lipschitz function under the Euclidean norm~$\|\cdot\|_2$. Then
    \begin{align}
        \mathbb{P}(F(X) - \E [F(X)]\geq \EPS)\leq e^{-\frac{\EPS^2}{2L^2}},\ \forall \EPS>0.
    \end{align}
\end{lemma}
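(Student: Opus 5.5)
The plan is to prove the sharp sub-Gaussian moment generating function bound
\begin{equation}
\E\, e^{\phi (F(X)-\E F(X))}\leq e^{\phi^2L^2/2}\quad\text{for all }\phi\in\R,
\end{equation}
and then conclude with the Chernoff bound, as in the proof of Lemma~\ref{lem:HW}. Indeed, $\mathbb{P}(F(X)-\E F(X)\geq\EPS)\leq e^{-\phi\EPS+\phi^2L^2/2}$, and the choice $\phi=\EPS/L^2$ gives exactly $e^{-\EPS^2/(2L^2)}$. The constant $\tfrac12$ matters here. The elementary Maurey--Pisier interpolation argument only yields a constant of order $2/\pi^2$ in the exponent, so I would instead use a stochastic-calculus (equivalently, semigroup) representation that gives the sharp constant.

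\emph{Reduction to smooth $F$.} First I would reduce to $F\in C^\infty$ with $\|\nabla F\|_2\leq L$ everywhere. A Lipschitz $F$ has at most linear growth, so $F(X)$ is integrable. Mollifying, $F_\delta:=F*\rho_\delta$ is smooth, still $L$-Lipschitz (hence $\|\nabla F_\delta\|_2\le L$), and satisfies $|F_\delta-F|\leq L\delta\,\E\|\xi\|_2$ uniformly, where $\xi$ has density $\rho$. Therefore $F_\delta(X)-\E F_\delta(X)\to F(X)-\E F(X)$ uniformly. The MGF bound then passes to the limit by dominated convergence; a Fatou-type argument on the tail probability works equally well.

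\emph{Martingale representation.} Let $(B_s)_{s\in[0,1]}$ be a standard $n$-dimensional Brownian motion, so that $B_1\overset{d}{=}X$. Define $P_uF(x):=\E F(x+B_u)$ (the heat semigroup) and $M_s:=P_{1-s}F(B_s)$. Then $M_s=\E[F(B_1)\mid\mathcal{F}_s]$, with $M_0=\E F(X)$ and $M_1=F(B_1)$. Since $u\mapsto P_u F$ solves the backward heat equation, It\^o's formula gives
\begin{equation}
F(B_1)-\E F(X)=\int_0^1\nabla P_{1-s}F(B_s)^T\,dB_s .
\end{equation}
Moreover, $\nabla P_uF=P_u\nabla F$, so $\|\nabla P_{1-s}F\|_2\leq L$ pointwise.

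\emph{Exponential martingale.} Because the integrand is bounded by $L$, Novikov's condition holds trivially. Hence
\begin{equation}
\exp\Big(\phi\int_0^1\nabla P_{1-s}F(B_s)^TdB_s-\tfrac{\phi^2}{2}\int_0^1\|\nabla P_{1-s}F(B_s)\|_2^2ds\Big)
\end{equation}
has expectation $1$. Bounding the quadratic variation term by $\phi^2L^2/2$ yields $\E e^{\phi(F(X)-\E F(X))}\leq e^{\phi^2L^2/2}$, which is the claimed MGF bound.

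\emph{Main obstacle and fallback.} The delicate points are justifying It\^o's formula and the gradient commutation for $P_uF$ near $u=0$, and checking the integrability needed for the martingale property. Smoothness of $F$ together with Gaussian integrability of $F$ and its derivatives handles both. If this route becomes awkward, I would fall back on the Gaussian log-Sobolev inequality $\mathrm{Ent}(g^2)\leq 2\E\|\nabla g\|_2^2$, either cited from~\cite{boucheron2013concentration} or proved by Ornstein--Uhlenbeck semigroup differentiation. Applying it to $g=e^{\phi F/2}$ gives $\mathrm{Ent}(e^{\phi F})\leq\frac{\phi^2L^2}{2}\E e^{\phi F}$. Herbst's argument then applies: the function $\Lambda(\phi)=\phi^{-1}\log\E e^{\phi(F-\E F)}$ satisfies $\Lambda'\leq L^2/2$ and $\Lambda(0^+)=0$, which recovers the same MGF bound.
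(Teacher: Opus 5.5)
The paper does not actually prove this lemma: it is quoted from \cite{boucheron2013concentration}. The argument there is the Gaussian logarithmic Sobolev inequality combined with Herbst's argument, which is exactly your fallback, including your step $\mathrm{Ent}(e^{\phi F})\le\frac{\phi^2L^2}{2}\E e^{\phi F}$ followed by $\Lambda'\le L^2/2$. Your main route is a genuinely different proof, and it is correct.

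\begin{itemize}
\item Your mollification step works. The uniform estimate $|F_\delta-F|\le L\delta\,\E\|\xi\|_2$ even lets you bypass dominated convergence, since it gives $\E e^{\phi(F(X)-\E F(X))}\le e^{2|\phi|L\delta\E\|\xi\|_2}\,\E e^{\phi(F_\delta(X)-\E F_\delta(X))}$ directly.
\item You then write $F(B_1)-\E F(X)$ as an It\^o integral whose integrand $P_{1-s}\nabla F(B_s)$ has norm at most $L$. The exponential martingale turns this deterministic bound on the quadratic variation into the sharp bound $e^{\phi^2L^2/2}$.
\item The regularity issues you flag are harmless. $F_\delta$ has bounded derivatives of every order $\ge 1$, so $(s,x)\mapsto P_{1-s}F_\delta(x)$ is $C^{1,2}$ on $[0,1]\times\R^n$. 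One small wording slip: it is $s\mapsto P_{1-s}F$, not $u\mapsto P_uF$, that solves the backward heat equation.
\end{itemize}

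The trade-off between the two routes is this. The It\^o route is short and self-contained once stochastic calculus is available, and its only analytic input is the commutation $\nabla P_u=P_u\nabla$. The log-Sobolev route needs the LSI as a separate, nontrivial theorem, but it is purely analytic and carries over verbatim to any measure satisfying a log-Sobolev inequality.

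It is also worth noting how little the paper needs. It uses this lemma only once, in the proof of Theorem~\ref{thm:probabilistic_bound}, and only for a linear function $F(x)=c^T(\Sigma^w_t)^{1/2}x$ with $c=\mathcal{L}^1_t+2\mathcal{L}^{2,s}_tm^w_t$. There $F(X)\sim\mathcal{N}(0,\|(\Sigma^w_t)^{1/2}c\|_2^2)$ has moment generating function exactly $e^{\phi^2\|(\Sigma^w_t)^{1/2}c\|_2^2/2}$. So your final Chernoff step alone already gives what the paper requires, and neither heavy route is needed for the application.
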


\subsection{Proof of Theorem~\ref{thm:well_posedness}}\label{apdx:pf_well_posedness}

    Firstly, we prove that, if $I_n - \lambda (\Sigma_y^2)^{-\frac12}R_\beta^{-1}(\Sigma_y^2)^{-\frac12}\succeq 0$, then the quadratic terms in~\eqref{eq:modified_running_cost} remain positive, i.e.,
    \begin{equation}
        \label{eqn:claim1}
        \frac12\mathbf{x}^TQ_t\mathbf{x} + \frac12\mathbf{u}^T R \mathbf{u} + \mathbf{x}^T N_t \mathbf{u}\geq  0,\ \forall(\mathbf{x},\mathbf{u}).
    \end{equation}
    Recall that $R_\alpha, R_\beta, R_v$ are positive definite and
    \begin{equation}
    \begin{aligned}
        Q_t-N_tR^{-1}N_t^T& = \begin{bmatrix}
            R_v+\lambda (F_t^b)^TW(\lambda)F_t^b & \lambda (F_t^b)^TW(\lambda)F_t^c \\
            \lambda (F_t^c)^TW(\lambda)F_t^b & \lambda (F_t^c)^TW(\lambda)F_t^c
        \end{bmatrix}\\
        & = \begin{bmatrix}
            R_v & \mathbf{0}_{n\times n} \\ \mathbf{0}_{n\times n}& \mathbf{0}_{n\times n}
        \end{bmatrix} + \bar{F}_t^TW(\lambda)\bar{F}_t,
    \end{aligned}
    \end{equation}
    where $\bar{F}_t := \begin{bmatrix}
        F_t^b & F_t^c
    \end{bmatrix}\in\R^{n\times 2n}$ and $W(\lambda):=\lambda(\Sigma_y^2)^{-\frac12}[I_n - \lambda (\Sigma_y^2)^{-\frac12}R_\beta^{-1}(\Sigma_y^2)^{-\frac12}](\Sigma_y^2)^{-\frac12}\succeq 0$.
    As a result, $Q_t-N_tR^{-1}N_t^T\succeq 0$.
    By the Schur complement~\cite[Appendix 5.5]{boyd2004convex}, since \(R\succ 0\), \(\begin{bmatrix}
            Q_t & N_t \\ N_t^T & R
        \end{bmatrix} \succeq 0,\ \forall t \in [T-1]\), the claim~\eqref{eqn:claim1} is proved.

    Then, we show that~\eqref{eqn:claim1} implies $P_t\succeq0$ for any $t\in[T]$.
    We prove this claim by contradiction, assuming that there exists~$t_0 \in [T]$ such that~$P_{t_0}$ is not positive semi-definite.
    In other words, there exist~$t_0 \in [T]$ and~$\mathbf{y}_0 \neq 0$ such that~$\mathbf{y}_0^TP_{t_0}\mathbf{y}_0<0$.
    Since~$\frac{\mathbf{s}_{t_0}^Ta\yv_0+c_{t_0}}{(a\yv_0)^TP_{t_0}(a\yv_0)} \rightarrow 0$ 
    as~$a \rightarrow \infty$,     
    there exists~$a_0 > 0$     
    such that~$\mathbf{s}_{t_0}^T(a\yv_0)+c_{t_0} \leq \frac14|(a\yv_0)^TP_{t_0}(a\yv_0)|$ for any $a>a_0$.    
    Combining with the ansatz~\eqref{eqn:ansatz}, we yield
    \begin{multline}
        V_{t_0}(a\yv_0) = \frac12(a\yv_0)^TP_{t_0}(a\yv_0) +\mathbf{s}_{t_0}^T(a\yv_0)+c_{t_0} \\ \leq \frac14(a\yv_0)^TP_{t_0}(a\yv_0)<0,\ \forall a > a_0.
    \end{multline}    
    However, due to the claim~\eqref{eqn:claim1} and the positivity of the terminal cost~\eqref{eq:primary_task}, there exists $a_1>0$     
    such that~$V_{t_0}(\xv)\geq 0$ for any~$\norm{\xv}\geq a_1$ (cf. definition~\eqref{eq:def_value_func}). Choosing~$a\geq \max{\left\{a_0,\frac{a_1}{\norm{y_0}}\right\}}$ leads to a contradiction regarding the sign of~\(V_{t_0}(a\yv_0)\), which proves~$P_t\succeq0$ for any~$t\in[T]$.

    Lastly, we show the global existence and uniqueness of the solution to~\eqref{eq:backward_recursion}.
    Since~$P_t\succeq0$ for any~$t\in[T]$ and \(R\succ 0\),~$H_t=  R+B^TP_{t+1}B$ remains invertible at any time~$t$, which concludes the proof.

\subsection{Proof of Corollary~\ref{cor:two_special_cases}}\label{apdx:pf_two_special_cases}

    Firstly, we claim that when $\lambda=0$ or $\lambda = R_\beta\Sigma_y^2$, the matrix \(P_t\) and the vector \(\mathbf{s}_t\) have the following sparsity patterns 
    \begin{equation}\label{eq:sparcity_patterns}
        P_{t} = \begin{bmatrix}
        \rho_{t}&0\\0&0
    \end{bmatrix}, \quad \mathbf{s}_{t} = \begin{bmatrix}
        \eta_{t}\\0
    \end{bmatrix},\quad \forall t\in[T],
    \end{equation}
    for some $\rho_t, \eta_t \in \mathbb{R}$.
    We prove this claim by induction. Since \(P_T\) and \(\mathbf{s}_T\) exhibit the desired sparsity patterns (cf.~\eqref{eq:terminal_conditions}), it suffices to show that if \(P_{t+1}\) and \(\mathbf{s}_{t+1}\) satisfy the sparsity patterns in~\eqref{eq:sparcity_patterns}, 
    then \(P_t\) and \(\mathbf{s}_{t}\) also satisfy them.
    Indeed, by~\eqref{eq:matrices_for_new_cost} and~\eqref{eq:backward_recursion},
        \begin{align}
        P_t & = M_t - G_tH_t^{-1}G_t^T \\
    & = Q_t + A^TP_{t+1}A  \\ & \quad\quad- (N_t+A^TP_{t+1}B)(R+P_{t+1})^{-1}(N_t+A^TP_{t+1}B)^T \\
    & = \begin{bmatrix}
        R_v + \frac{R_\alpha\rho_{t+1}}{R_\alpha + \rho_{t+1}} & 0\\0& 0
    \end{bmatrix} + \frac{\lambda}{\Sigma_y^2}\Big(1-\frac{\lambda}{R_\beta\Sigma_y^2}\Big)\begin{bmatrix}
         (F_t^b)^2 & F_t^bF_t^c\\F_t^bF_t^c& (F_t^c)^2
     \end{bmatrix},
    \end{align}
    \vspace{-2em}
    \begin{align}
        \mathbf{s}_t &= \mathbf{q}_t + A^T\mathbf{s}_{t+1} - G_tH_t^{-1}(\mathbf{r}_t + B^T \mathbf{s}_{t+1})\\
        & = \mathbf{q}_t + A^T\mathbf{s}_{t+1} - (N_t+A^TP_{t+1}B)(R+P_{t+1})^{-1}(\mathbf{r}_t + B^T\mathbf{s}_{t+1}) \\
        & = \begin{bmatrix}
            -R_v\bar{\mathbf{v}}_t +\frac{R_\alpha\eta_{t+1}}{R_\alpha + \rho_{t+1}} \\ 0
        \end{bmatrix} + \frac{\lambda}{\Sigma_y^2}\Big(1-\frac{\lambda}{R_\beta\Sigma_y^2}\Big)\begin{bmatrix}
            F_t^bF_t^d\\ F_t^cF_t^d
        \end{bmatrix},
    \end{align}
    which proves the claim.  
    
    When $P_t$ and $\mathbf{s}_t$ satisfy the sparsity patterns in~\eqref{eq:sparcity_patterns}, by~\eqref{eq:optimal_control}, the optimal control is given by
    \begin{align}
        \mathbf{u}_t^* &= -H_t^{-1}(G_t^T\mathbf{x}_t +\mathbf{r}_t + B^T\mathbf{s_{t+1})} \\
        & = -(R+P_{t+1})^{-1}((N_t + A^TP_{t+1})^T\mathbf{x}_t+\mathbf{r}_t + \mathbf{s}_{t+1})\\
        & = -\begin{bmatrix}
            \frac{1}{R_\alpha+\rho_{t+1}} & 0 \\ 0 & \frac{1}{R_\beta}
        \end{bmatrix}\left(\begin{bmatrix}
            \rho_{t+1} & 0 \\ -\frac{\lambda F_t^b}{\Sigma_y^2} & -\frac{\lambda F_t^c}{\Sigma_y^2} 
        \end{bmatrix}\mathbf{x}_t + \begin{bmatrix}
            \eta_{t+1} \\ -\frac{\lambda \mathbf{f}_t^d}{\Sigma_y^2}
        \end{bmatrix}\right)\\
        & = \begin{bmatrix}
            -\frac{\rho_{t+1}\mathbf{v}_t+\eta_{t+1}}{R_\alpha+\rho_{t+1}} \\ \frac{\lambda (F_t^b \mathbf{v}_t + F_t^c \mathbf{p}_t + \mathbf{f}_t^d)}{R_\beta \Sigma_y^2}
        \end{bmatrix}.
    \end{align}
    It is clear that if \(\lambda = 0\), then $\beta^* \equiv 0$; if~$\lambda = R_\beta\Sigma_y^2$, then $\beta_t^* = F_t^b \mathbf{v}_t + F_t^c \mathbf{p}_t + \mathbf{f}_t^d$, with the optimal~$\alpha^*$ to be identical in both cases.
    This concludes the proof.

\subsection{Proof of Lemma~\ref{prop:E_log_L_t_star}}\label{apdx:pf_E_log_L_t_star}
    
    From the state dynamics~\eqref{eq:matrix_form_state_dynamics}, the optimal control~\eqref{eq:optimal_control}, and Assumption~\ref{assu:sd_x0}, 
    \begin{align}\label{eq:x_star}
        \mathbf{x}^*_k &= K_{k-1}^T\mathbf{x}^*_{k-1}- H^{-1}_{k-1}\mathbf{s}_k + \mathbf{w}_{k-1}.
    \end{align}
    Since~$\xv_0^*$ is deterministic and~$\{\wv_t\}_{t\in[T-1]}$ are i.i.d. Gaussian,~$\xv_k^*$ is Gaussian for any~$k\in[T]$.
    Taking expectations on both sides yields~\eqref{eq:mean_x_k}.
    Since~$R$,~$R_v$, and~$T_v$ are symmetric, by~\eqref{eq:backward_recursion} and~\eqref{eq:terminal_conditions},~$P_t$ is symmetric, hence~$H_t^{-1}$ is symmetric.
    Plugging~\eqref{eq:x_star} into~\(\Sigma^*_k = \E [\mathbf{x}^*_k(\mathbf{x}^*_k)^T] - m^*_k(m^*_k)^T\) yields~\eqref{eq:cov_x_k}.
    This proves the first part of the Lemma.

    Next, we compute $\E\log L^*_t$. Using~\eqref{eq:state_dynamics},~\eqref{eq:log_L_T} and Assumption~\ref{assu:sd_x0},
    \begin{align}\label{eq:log_L_t_star}
        \log L_t^* &= \frac12\sum_{k=0}^{t-1} \Big\{\norm{\beta^*_k + Y_k}^2_{(\Sigma_y^2)^{-1}}\\&\qquad- \norm{\beta^*_k + Y_k - F^b_k \mathbf{v}^*_k -F^c_{k} \mathbf{p}^*_{k}}^2_{(\Sigma_y^2)^{-1}}\Big\}\\
        \label{eq:log_L_t_star_matrix}
        &= \sum_{k=0}^{t-1}\Big[(\beta^*_k + Y_k)^T(\Sigma_y^2)^{-1}F_k^T \mathbf{x}^*_k  - \frac12(\mathbf{x}^*_k)^TF_k(\Sigma_y^2)^{-1}F_k^T\mathbf{x}^*_k\Big].
    \end{align}
    Noticing~$\beta^*_k = E_2^T\mathbf{u}^*_k$ and plugging~\eqref{eq:optimal_control} into the equation above yields
    \begin{multline}
        \log L_t^*  = \sum_{k=0}^{t-1}\Big[Y_k^T(\Sigma_y^2)^{-1}F_k^T \mathbf{x}^*_k - \mathbf{s}_{k+1}^TH_k^{-1}E_2(\Sigma_y^2)^{-1}F_k^T\mathbf{x}^*_k \\
        - \frac12(\mathbf{x}^*_k)^T(F_k(\Sigma_y^2)^{-1}F_k^T + 2G_kH_k^{-1}E_2(\Sigma_y^2)^{-1}F_k^T)\mathbf{x}^*_k\Big].
    \end{multline}
    Since the randomness comes from two independent Gaussian sources, namely \(Y_k\) and \(\mathbf{x}_k^*\),  
    \begin{multline}
        \mathbb{E}\log L_t^* = -\frac12\sum_{k=0}^{t-1}\Big( 2\mathbf{s}_{k+1}^TH_k^{-1}E_2(\Sigma_y^2)^{-1}F_k^T \mathbb{E}[\mathbf{x}^*_k]  \\+ \mathbb{E}[(\mathbf{x}^*_k)^T(F_k(\Sigma_y^2)^{-1}F_k^T + 2G_kH_k^{-1}E_2(\Sigma_y^2)^{-1}F_k^T)\mathbf{x}^*_k]\Big).
    \end{multline}
    Using \(\mathbf{x}^*_k\sim \mathcal{N}(m^*_k,\Sigma^*_k)\),  
    \begin{multline}
        \mathbb{E}[(\mathbf{x}^*_k)^T(F_k(\Sigma_y^2)^{-1}F_k^T + 2G_kH_k^{-1}E_2(\Sigma_y^2)^{-1}F_k^T)\mathbf{x}^*_k] \\ =  \mathrm{Tr}((F_k(\Sigma_y^2)^{-1}F_k^T + 2G_kH_k^{-1}E_2(\Sigma_y^2)^{-1}F_k^T)\E[\mathbf{x}^*_k(\mathbf{x}^*_k)^T]),
    \end{multline}
    which proves~\eqref{eqn:E_log_L_star}.
    This concludes the proof.

\subsection{Proof of Theorem~\ref{thm:probabilistic_bound}}\label{apdx:pf_probabilistic_bound}

    Firstly, we claim that
    \begin{equation}\label{eq:log_L_t_star_quadratic}
        \log L^*_t = \mathcal{L}^0_t + (\mathcal{L}^1_t)^T\mathbf{w}^{(t)} + (\mathbf{w}^{(t)})^T\mathcal{L}^{2}_t\mathbf{w}^{(t)}.
    \end{equation} 
    where \(\mathbf{w}^{(t)} := \mathrm{Concat}(\xv_0, \mathbf{w}_0, \ldots, \mathbf{w}_{t-1})\) is an \(\R^{2n(t+1)}\)-valued random vector and $\mathcal{L}_t^0\in \R$.    
    
    To see why this claim holds, we represent $\xv^*_k$ in terms of $\{\wv_j\}_{j=-1}^{t-1}$, adopting the convention that $\mathbf{w}_{-1} := \mathbf{x}_0$. 
    By recursively applying~\eqref{eq:x_star}, we get
    \begin{equation}
        \mathbf{x}^*_k = \prod_{j=0}^{k-1}K_{j}^T\mathbf{x}_0 - \sum_{j=0}^{k-1}\Big(\prod_{i=j+1}^{k-1}K_{i}^T\Big)H_j^{-1}\mathbf{s}_{j+1}+ \sum_{j=0}^{k-1}\Big(\prod_{i=j+1}^{k-1}K_{i}^T\Big)\mathbf{w}_j,
    \end{equation}
    where the matrix product \(\prod_{i=j+1}^{k-1}K_{i}^T := K_{k-1}^T\ldots K_{j+1}^T\) is defined in reverse order to account for the non-commutativity of matrix multiplication.
    The representation simplifies to
    \begin{equation}\label{eq:x_star_in_w}
        \xv_k^* = \mathcal{D}_{k-1} + \sum_{j=-1}^{k-1}\mathcal{C}_{j+1,k-1}\mathbf{w}_j.
    \end{equation}
    Substituting~\eqref{eq:x_star_in_w} and~$Y_k = E_2^T\wv_k$ into~\eqref{eq:log_L_t_star_matrix} gives
    \begin{multline}
        \log L_t^* =  \sum_{k=0}^{t-1}\Big\{-\mathbf{s}_{k+1}^TH_k^{-1}E_2(\Sigma_y^2)^{-1}F_k^T\mathcal{D}_{k-1}\\ - \frac12\mathcal{D}_{k-1}^T\mathcal{S}_k\mathcal{D}_{k-1}
    +\mathcal{D}_{k-1}^TF_k(\Sigma_y^2)^{-1}E_2^T\wv_k\\
    + \sum_{j=-1}^{k-1}\mathbf{w}_k^TE_2(\Sigma_y^2)^{-1}F_k^T\mathcal{C}_{j+1,k-1}\mathbf{w}_j \\
    - \sum_{j=-1}^{k-1}[\mathbf{s}_{k+1}^TH_k^{-1}E_2(\Sigma_y^2)^{-1}F_k^T + \mathcal{D}_{k-1}^T\mathcal{S}_k^s ]\mathcal{C}_{j+1,k-1}\mathbf{w}_j \\ - \frac12\sum_{i,j=-1}^{k-1}\mathbf{w}_i^T \mathcal{C}_{i+1,k-1}^T\mathcal{S}_k^T\mathcal{C}_{j+1,k-1}\mathbf{w}_j\Big\}.
    \end{multline}
    Interchanging the order of the summations provides~\eqref{eq:log_L_t_star_quadratic}, where
    \begin{equation}
        \mathcal{L}^0_t := -\frac{1}{2}\sum_{k=0}^{t-1}\Big[2\mathbf{s}_{k+1}^TH_k^{-1}E_2(\Sigma_y^2)^{-1}F_k^T\mathcal{D}_{k-1} + \mathcal{D}_{k-1}^T\mathcal{S}_k\mathcal{D}_{k-1}\Big].
    \end{equation}

    Next, we bound $\mathbb{P}(\log L_t^*\geq \log \upperthreshold)$ based on the representation~\eqref{eq:log_L_t_star_quadratic}.
    Clearly, $m^w_t = \E [\mathbf{w}^{(t)}]$ and~$\Sigma^w_t = \mathrm{cov}(\mathbf{w}^{(t)})$.
    Let~$\bar{\wv}^{(t)} := \wv^{(t)} - m_t^w$ be the centered random vector. By~\eqref{eq:log_L_t_star_quadratic},
    \begin{align}\label{eq:log_L_t_star_decomposed}
        \log L^*_t 
        &= \mathcal{L}^0_t + (\mathcal{L}^1_t)^T\mathbf{w}^{(t)} + (\mathbf{w}^{(t)})^T\mathcal{L}^{2,s}_t\mathbf{w}^{(t)}\\
        &= [\mathcal{L}^0_t + (\mathcal{L}^1_t)^Tm^w_t +(m^w_t)^T\mathcal{L}^{2,s}_tm^w_t]\\ & \qquad + (\mathcal{L}^1_t + 2\mathcal{L}^{2,s}_tm^w_t)^T\bar{\wv}^{(t)} + (\bar{\wv}^{(t)})^T\mathcal{L}^{2,s}_t\bar{\wv}^{(t)}.
    \end{align}
    
    Using Lemma~\ref{lem:HW}, we bound the quadratic form in $\bar{\wv}^{(t)}$:
    \begin{multline}\label{eq:bound_quadratic_part}\mathbb{P}\Big((\bar{\wv}^{(t)})^T\mathcal{L}^{2,s}_t\bar{\wv}^{(t)} - \E[(\bar{\wv}^{(t)})^T\mathcal{L}^{2,s}_t\bar{\wv}^{(t)}]\geq \EPS_2\Big)\leq \\
        \mathrm{exp}\Big\{-\frac18\Big(\frac{\EPS_2^2}{\|(\Sigma^w_t)^{\frac12}\mathcal{L}^{2,s}_t(\Sigma^w_t)^{\frac12}\|_F^2}\wedge \frac{\EPS_2}{\|(\Sigma^w_t)^{\frac12}\mathcal{L}^{2,s}_t(\Sigma^w_t)^{\frac12}\|_2}\Big)\Big\},
    \end{multline}
    for any $\epsilon_2>0$.
    
    Using Lemma~\ref{lem:Gaussian_Concen}, we bound the linear form in $\bar{\wv}^{(t)}$:
    \begin{multline}\label{eq:bound_linear_part}
        \mathbb{P}((\mathcal{L}^1_t + 2\mathcal{L}^{2,s}_tm^w_t)^T\bar{\wv}^{(t)} - \E[(\mathcal{L}^1_t + 2\mathcal{L}^{2,s}_tm^w_t)^T\bar{\wv}^{(t)}]\geq \EPS_1) \\ \leq
        \mathrm{exp}\Big\{-\frac{\EPS_1^2}{2\|(\Sigma^w_t)^{\frac12}(\mathcal{L}^1_t + 2\mathcal{L}^{2,s}_tm^w_t)\|_2^2}\Big\},
    \end{multline}
    which holds for any $\epsilon_1>0$.
    
    By~\eqref{eq:log_L_t_star_decomposed}, combining both bounds~\eqref{eq:bound_quadratic_part} and~\eqref{eq:bound_linear_part} yields
    \begin{multline}
        \mathbb{P}(\log L^*_t - \E \log L^*_t\geq \EPS_1 + \EPS_2) \\\leq \mathrm{exp}\Big\{-\frac{\EPS_1^2}{2\|(\Sigma^w_t)^{\frac12}(\mathcal{L}^1_t + 2\mathcal{L}^{2,s}_tm^w_t)\|_2^2}\Big\} \\
        + \mathrm{exp}\Big\{-\frac18\Big(\frac{\EPS_2^2}{\|(\Sigma^w_t)^{\frac12}\mathcal{L}^{2,s}_t(\Sigma^w_t)^{\frac12}\|_F^2}\wedge \frac{\EPS_2}{\|(\Sigma^w_t)^{\frac12}\mathcal{L}^{2,s}_t(\Sigma^w_t)^{\frac12}\|_2}\Big)\Big\}
    \end{multline}
    for any $\epsilon_1, \epsilon_2>0$.
    
    Whenever \( \E\log L^*_t + \EPS_1+\EPS_2\leq \log \upperthreshold\) for some \(\EPS_1,\EPS_2>0\),
    \begin{equation}
        \mathbb{P}(\log L^*_t \geq \log \upperthreshold)\leq \mathbb{P}(\log L^*_t - \E \log L^*_t\geq \EPS_1 + \EPS_2),
    \end{equation}
    which concludes the proof.
    Since the probabilistic upper bound is monotone decreasing in~\(\EPS_1\) and~\(\EPS_2\), the bound is minimized (yielding the tightest upper bound) when~\(\EPS_1\) and~\(\EPS_2\) take maximum values, i.e., \(\E\log L^*_t + \EPS_1+\EPS_2 = \log \upperthreshold\).